\newtheorem{defn}{Definition}[section]
\newtheorem{definition}[defn]{Definition}
\newtheorem{thm}[defn]{Theorem}
\newtheorem{pr}[defn]{Proposition}
\newtheorem{lem}[defn]{Lemma}
\newtheorem{corollary}[defn]{Corollary}
\theoremstyle{definition}
\newtheorem{example}[defn]{Example}
\newtheorem{remark}[defn]{Remark}
\theoremstyle{remark}
\theoremstyle{example}
\def\grad{\nabla}
\def\vv{{\bf v}}
\def\zz{{\bf z}}
\def\xx{{\bf x}}
\def\one{{\bf 1}}
\def\rr{{\bf r}}
\def\codim{\operatorname{codim}}
\def\mm{{\bf m}}
\def\rhat{\hat{\rr}}
\def\sing{{V}}
\def\seq{{\mathbf{z}_n}}
\def\sequ{{\left\{\seq\right\}_{n=1}^{\infty}}}
\def\seqw{{\mathbf{w}_n}}
\def\sequw{{\left\{\seqw\right\}_{n=1}^{\infty}}}
\def\R{{\mathbb{R}}}
\def\CP{{\mathbb{CP}}}
\def\C{{\mathbb{C}}}
\def\NP{{\mathcal P}}
\def\FF{{\mathcal F}}
\def\AA{{\mathcal A}}
\begin{document}

\begin{center}
{\Large \bf Critical Points at Infinity for Hyperplanes of Directions} \\

{\large Stephen Gillen}

\end{center}

\textbf{Abstract}: Analytic combinatorics in several variables (ACSV) analyzes the asymptotic growth of the coefficients of a meromorphic generating function $F = G/H$ in a \textit{direction} $\mathbf{r}$.  It uses Morse theory on the pole variety $V := \{ H = 0 \} \subseteq (\C^*)^d$ of $F$ to deform the torus $T$ in the multivariate Cauchy Integral Formula via the downward gradient flow for the \textit{height} function $h = h_{\rr} = -\sum_{j=1}^d r_j \log |z_j|$, giving a homology decomposition of $T$ into cycles around \textit{critical points} of $h$ on $V$. The deformation can flow to infinity at finite height when the height function is not a proper map. This happens only in the presence of a critical point at infinity (CPAI): a sequence of points on $V$ approaching a point at infinity, and such that log-normals to $V$ converge projectively to $\mathbf{r}$.  The CPAI is called \textit{heighted} if the height function also converges to a finite value. This paper studies whether all CPAI are heighted, and in which directions CPAI can occur. We study these questions by examining sequences converging to faces of a toric compactification defined by a multiple of the Newton polytope $\NP$ of the polynomial $H$. Under generically satisfied conditions, any projective limit of log-normals of a sequence converging to a face $F$ must be parallel to $F$; this implies that CPAI must always be heighted and can only occur in directions parallel to some face of $\NP$. When this generic condition fails, we show under a smoothness condition, that a point in a codimension-1 face $F$ can still only be a CPAI for directions parallel to $F$, and that the directions for a codimension-2 face can be a larger set, which can be computed explicitly and still has positive codimension.

\section{Introduction}

A sequence {$a_n$} having combinatorial interest can often be encoded as a generating function, that is, as coefficients of a power series $\sum_{n=0}^\infty a_n z^n$ of a concisely described function $f(z)$. For example, the Fibonacci numbers have a generating function of $$\frac{1}{1-z-z^2} = 1 + 1z + 2z^2 + 3z^3 + 5z^4 + ....$$ It is often possible to use complex contour integration (based on the Cauchy Integral Formula) to get estimates on $a_n$ out of the function $f(z)$ represented by the power series; for rational functions, this can be done by examining the distance of its poles from the origin and their multiplicities. (For the Fibonacci numbers, the closest pole is $\frac{1}{\phi}$ of multiplicity 1, where $\phi = \frac{1+\sqrt{5}}{2}$ is the golden ratio, and the Fibonacci numbers grow asymptotically like a constant times $\phi^n$.) For univariate generating functions, there is a direct connection between the leading asymptotics of the coefficients of a univariate generating function and the locations and natures of its minimum-modulus singularities.

\textbf{Analytic combinatorics in several variables} (ACSV) is the science of doing this for \textit{multivariate} generating functions, that is, studying the asymptotic growth of the series coefficients of multivariate generating functions $f(z_1, ..., z_d) = \sum_\textbf{r} a_\textbf{r} \textbf{z}^\textbf{r}$ (where $\textbf{z}^\textbf{r}$ means $z_1^{r_1} \cdots z_d^{r_d}$) as $||\mathbf{r}|| \rightarrow \infty$ with $\frac{\mathbf{r}}{||\mathbf{r}||} \rightarrow \mathbf{\hat{r}}$ for some fixed unit vector $\mathbf{\hat{r}}$. For example, the binomial coefficients $\binom{n+k}{n}$ have the multivariate generating function $\frac{1}{1-x-y} = \sum_{k=0}^{\infty} \sum_{n=0}^{\infty} \binom{n+k}{n} x^n y^k$. Sometimes, the coefficient asymptotics of a more complicated univariate generating function (one given as the solution to an algebraic or differential equation) can be computed using the diagonal (the coefficients of $z_1^n \cdots z_d^n$) of a simpler (rational) generating function in several variables, so ACSV can help to analyze univariate generating functions as well.

ACSV relies heavily on methods from topology, complex analysis, and computer algebra. Given a rational generating function $F = G/H$, let $V \subseteq \C^d := \{ \zz : H(\zz) = 0 \}$ denote the affine variety on which $H$ vanishes, let $\C^* = \C \backslash \{0\}$, and let $V^* := V \cap (\C^*)^d$.  According to the multivariate Cauchy formula, if $T$ is a torus $$T = \{ \xx e^{i \theta} : \theta \in (\R / (2 \pi \R)^d \}$$ inside the domain of convergence for a Laurent series expansion of $H$, where $B$ is a convex subset of $\mathbb{R}^d$ that is disjoint from the image of $V^*$ under taking coordinate-wise log-modulus, then the coefficients $a_\rr$ are given by
$$a_\rr = \left ( \frac{1}{2 \pi i} \right )^d \int_T \zz^{-\rr - \one} F(\zz) \, d\zz \, .$$
Here, $\one$ represents the vector of all ones, $d\zz$ is the holomorphic volume form $dz_1 \wedge \cdots \wedge dz_d$, and we employ the multi-index notation $\mathbf{z}^{\mathbf{r}} = \prod_{j=1}^d z_j^{r_j}$.

The analysis involves deforming the torus $T$ as far from the origin as possible without crossing $V^*$, which is assumed to be smooth in this paper. This causes $T$ to decompose into cycles around certain \textit{critical points} of the height function $h_{\mathbf{r}}(\mathbf{z}) = -\sum_{j=1}^d r_j \log|z_j|$ on $V$; this is shown using Morse theory. (Morally, points farther from the origin have lower height because we are seeking the tightest possible bound on the asymptotics.) The problem ensues from the fact that Morse theory assumes that height functions are {\em proper}, meaning that the set of points whose height is in any compact interval must be compact. The next section details how Morse theory fails for nonproper functions.

\subsection{Critical points at infinity}

The gradient flow for $h_{\mathbf{r}}$ tells us how to deform $T$ downwards in height.  Morse theory assumes that $h : V^* \to \R$ is a proper function, meaning that the preimage of any compact set is compact.  When it is not, there is no guarantee of a decomposition of $T$ into cycles near critical points of $h$, and in fact we know examples where there is not such a decomposition.  Instead, under the downward gradient flow of $h$ on $V$, the cycle $T$ flows out to infinity before ever reaching a low enough height to encounter a critical point. (Note: For us, the coordinate hyperplanes, where one or more variables are zero, are also considered to be ``at infinity.") When this happens, there must be a critical point at infinity (CPAI).  To state this more precisely, we need a few definitions.  

\begin{defn}[Whitney stratification] ~~\\
	A Whitney stratified space is a finite collection of disjoint manifolds, called {\em strata}, in Euclidean space satisfying a containment condition and a Whitney condition.
	\begin{itemize}
		\item Containment: there is a partial order on the strata such that $S_{\alpha} < S_{\beta}$ if and only if $S_{\alpha} \subseteq \overline{S_{\beta}}$, where $\overline{S_{\beta}}$ denotes the topological closure of $S_{\beta}$.
		\item Second Whitney condition: if $S_{\alpha} < S_{\beta}$, $x_n \in S_{\beta}$, $y_n \in S_{\alpha}$, $x_n \to y \in S_{\alpha}$ and $y_n \to y$, then any limit of the secants $\overline{x_n y_n}$ is contained in the limit $\tau$ of the tangent spaces to $S_{\beta}$ at the $x_n$.  (This implies the first Whitney condition, that the tangent space to $S_{\alpha}$ at $y$ is also contained in $\tau$.)  For details, see any of the references~\cite{GM},~\cite[Appendix~D]{PW-book2} or~\cite{Whit1965}.
	\end{itemize}
\end{defn}

Any algebraic variety admits a Whitney stratification, and any complex algebraic variety admits a Whitney stratification whose strata are complex manifolds.

\begin{defn}[lognormal space]
	Let $H$ be a polynomial in $d$ variables, whose affine variety in the complex torus is denoted $V^* \subseteq (\C^*)^d$.  Let $\{ S_\alpha : \alpha \in I \}$ be a stratification of $V^*$ into complex manifolds, and let $\codim(S)$ denote the complex codimension of the stratum $S$.  For $\zz$ in any stratum $S$, let $T_\zz (S)$ denote the tangent space to $S$ at $\zz$ which has real codimension $2d - 2 \codim (S)$.  Define the lognormal space $N_\zz (V)$ to be the orthogonal complement in $\mathbb{R}^{2d}$ to $T_\zz (V)$ after mapping backward by the pointwise exponential map, then mapping forward again.   More formally, if $\phi : U \to W$ is the exponential map $\phi(\mathbf{z}) = \exp(\mathbf{z})$ on an open set in $(\C^*)^d$ such that $\phi$ is one to one, onto a neighborhood of $\zz$, then $N_\zz (S)$ is the image under $d\phi$ of the orthogonal complement in $\R^{2d} \cong \C^d$ of the tangent space to $\phi^{-1} (S)$ at $\phi^{-1} (\zz)$.
\end{defn}


\begin{defn}[CPAI]
	Fix a polynomial $H$ in $d$ variables and a Whitney stratification $\{ S_\alpha : \alpha \in I \}$ of its zero set $V^*$ in $(\C^*)^d$.
	Let $R \subseteq (\C^*)^d \times \CP^{d-1}$ be the relation holding for the pair $(\zz , \rr)$ if and only if $\zz \in S$ for some stratum $S$ and $\rr \in N_\zz (S)$.  A \textbf{CPAI} is a limit point $(\zz_* , \rr_*)$ of a sequence $(\zz_n , \rr_n)$ of pairs in the relation $R$ such that $\zz_* \notin \sing^*$.  In other words, $\zz_*$ must be a point at infinity, where infinity includes the coordinate hyperplanes as well as the hyperplanes at infinity.  We say that the sequence $(\zz_n , \rr_n)$ \textbf{witnesses} the CPAI.  A CPAI is called \textbf{heighted} if the sequence $h_{\rr_*} (\zz_n)$ has a finite limit point, and the set of such limit points for a given $\rr_*$ are called critical values at infinity (CVAI) in direction $\rr_*$ and denoted by $\beta (\rr_*)$.
\end{defn}

This definition, while a bit clunky, is precisely what is needed in~\cite{BaMP2022} to establish the Morse method for non-proper functions.  In particular, they show that in the absence of heighted CPAI with heights in an interval $[a,b]$, the usual Morse theoretic results hold over that interval.  For example, under this condition, if also there are no critical points for $h$ on $\sing^*$ with critical values in $[a,b]$, then there is a deformation retraction of the set $\sing^* \cap \{ \zz : h_{\rhat} (\zz) \leq b \}$ onto the set $\sing^* \cap \{ \zz : h_{\rhat} (\zz) \leq a \}$, hence these two spaces are homotopy equivalent.

A few important clarifications are necessary:

\begin{enumerate}
	\item All affine critical points of $h$ on the strata are defined by polynomial equations. Finding CPAI entails more than simply projectivizing the equations.  One must have an affine witness sequence, hence the algebra involves saturations of ideals.
	\item In \cite{BaMP2022}, the possibility is left open that a CPAI could exist but not be heighted, meaning that for any witness sequence, the heights of the points do not approach a finite number.
	\item Finally, it is possible for two sequences of points approaching criticality to converge to the same point in $\mathbb{CP}^d$ but have their heights converging to different values.
\end{enumerate}

The first of these three concerns is already addressed in \cite{BaMP2022}. This current work, which began at a Mathematics Research Communities (MRC) workshop, attempts to address the latter two issues by compactifying $V$ in the toric variety $X$ of the Newton polytope of $H$ instead of ordinary complex projective space: If $Q$ is the Newton polytope of $H$, enlarged to make it normal (see Definition \ref{normaldefn}), and $\left\{\mathbf{m}_1, ..., \mathbf{m}_s\right\} := Q \cap \mathbb{Z}^d$, then $X$ is defined to be the Zariski closure of the image of the map $\Phi : (\mathbb{C} \backslash \{0\})^d \rightarrow \mathbb{CP}^{s-1}$ given by $\Phi(\mathbf{t}) = \left[\mathbf{t}^{\mathbf{m}_1} : \cdots : \mathbf{t}^{\mathbf{m}_s}\right]$. 

Throughout the rest of this paper we assume that $V$ is smooth; $V$ itself forms a Whitney stratification. Under certain generically satisfied conditions, compactifying in $X$ guarantees that the only directions for which CPAI can exist are those parallel to a face of the Newton polytope, and that if the images in $X$ of two CPAI witness sequences converge to the same point in this new compactification, then the heights for the corresponding height function converge to the same finite number. These results also lead naturally to the conclusion that CPAI (as defined in \cite{BaMP2022}) correspond to points at infinity with well-defined heights in the toric compactification.  In addition, when the geometry of the Newton polytope satisfies certain conditions, this compactification admits a simplified coordinate system near any point at infinity, which is much easier to work with than limits of sequences of points in affine space.

\section{Toric Variety Background and Height Convergence}

\subsection{Background: The Newton Polytope and \\ Normality}

\begin{defn}
	A \textbf{convex polytope} can be defined either as the convex hull of a finite set of points in $\mathbb{R}^d$, or as a bounded intersection of finitely many half-spaces.
\end{defn}

\begin{defn}
	Let $H$ be a Laurent polynomial in $d$ variables, a finite sum of terms of the form $\sum_{j=1}^n c_j \zz^{\mathbf{m}_j}$ where $c_j \neq 0$ and $\mm_j \in \mathbb{Z}^d$. Then the \textbf{Newton polytope} of $H$ is the convex hull of the exponent vectors $\mm_j$.
\end{defn}

\begin{defn}
	\label{normaldefn}
	A convex polytope $Q \subseteq \mathbb{R}^d$ is called a \textbf{lattice polytope} if all of its vertices are in $\mathbb{Z}^d$. (For example, the Newton polytope of a Laurent polynomial $H$ is a lattice polytope.) A lattice polytope $Q$ is called \textbf{normal} if, for every positive integer $k$, every point in $kQ \cap \mathbb{Z}^d$ can be written as a sum of exactly $k$ points in $Q \cap \mathbb{Z}^d$. (If this is true for all sufficiently large integers $k$, but not necessarily for every positive integer $k$, then $Q$ is called \textbf{very ample}.)
\end{defn}

In the work that follows, we adopt the following setup: Let $H$ be a Laurent polynomial in $d$ variables with complex coefficients. Let $\NP$ be its Newton polytope. Assume WLOG that $\NP$ has full dimension $d$; otherwise, a monomial change of coordinates reduces to this case.  

Let $Q = \kappa\NP$ be an enlargement of $\NP$ that is normal, where $\kappa$ is a positive integer.  By~\cite[Theorem~2.2.11]{CLS}, for any $d > 1$, the scaling factor $\kappa = d-1$ is always sufficient.  Faces of normal polytopes are normal, and normality of $Q$ implies $Q$ is very ample. Let $\AA = \NP \cap \mathbb{Z}^d$, and let $A = Q \cap \mathbb{Z}^d = \{\mm_1, ..., \mm_s\}$ be the set of integer lattice points on and inside $Q$. We define a map $\Phi : (\mathbb{C}^*)^d \rightarrow \mathbb{CP}^{s-1}$ given by $\Phi(\mathbf{z}) = [\zz^{\mm_1} : \cdots : \zz^{\mm_s}]$ and define $X_A$ to be the closure of the image of $\Phi$ in $\mathbb{CP}^{s-1}$.  The topological and Zariski closures are equal by GAGA (Proposition 7 in \cite{Serr1956}).  For ACSV purposes, we will primarily be using the fact that it is the topological closure. The set $X_A$ is a closed subset of $\mathbb{CP}^{s-1}$ and is therefore compact, so every sequence has a convergent subsequence. Let $\sequ$ be a sequence of points in $(\mathbb{C}^*)^d$ such that their images $\Phi(\seq) \in X_A$ converge to a point $p$. We are chiefly concerned with the case where $\zz_n$ does not converge to an affine point, that is, the sequence ``goes to infinity."

\subsection{Classifying points in $X_A$ into faces at infinity}

We now describe what it means for a point $p \in X_A \subseteq \mathbb{CP}^{s-1}$ to be in the ``face at infinity" corresponding to a face $F$ of the enlarged Newton polytope $Q$. This is called the ``torus orbit" corresponding to $F$ in the language of \cite{GKZ}, but we believe the term ``face at infinity" is a lot more intuitive for ACSV purposes, at least when $F$ is not all of $Q$.

\begin{defn}
	Suppose that $F$ is a face of $Q$. Let $X^0(F)$, the interior of the face at infinity corresponding to $F$, be the set of all points $p$ in $X_A$ such that the nonzero coordinates of $p$ in $\mathbb{CP}^{s-1}$ are precisely those coordinates $j$ corresponding to lattice points $\mathbf{m}_j$ in $\mathbb{Z}^d \cap F$. Let $X(F)$ denote the closure of $X^0 (F)$ in $\CP^{s-1}$, or equivalently, in $X_A$.  We call $X(F)$ the face at infinity corresponding to $F$.
\end{defn}

Notice that a point $p \in X(F)$ must have coordinates equal to zero in all positions corresponding to lattice points outside $F$, but some of the coordinates of $p$ in positions corresponding to lattice points in $F$ may also be zero. Also, when $F$ is all of $Q$ (which is considered a face of itself), $X(Q)$ is all of $X_A$, and $X^0(Q)$ is the set of points $p$ in $X_A$ whose coordinates in $\mathbb{CP}^{s-1}$ are all nonzero.

An important result from the literature is that every point in $X_A$ is in $X^0(F)$ for exactly one face $F$ of $Q$.

\begin{lem}[Faces at infinity]
	\label{faces}
	Let $p \in X_A$. Then $p$ is in the interior of the face at infinity corresponding to exactly one face $F$; that is, there exists a unique face $F$ of $Q$ (of some dimension) such that the nonzero coordinates of $p$ are precisely those in positions corresponding to lattice points in $\mathbb{Z}^d \cap F$. (If $p$ has all nonzero coordinates, then $F = Q$.) Furthermore, for every face $F$, $X(F) \cong X_{A \cap F}$ is nonempty and has dimension equal to the dimension of $F$.
\end{lem}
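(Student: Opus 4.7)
The plan is to dispatch the three assertions — uniqueness of $F$, the isomorphism $X(F) \cong X_{A \cap F}$, and the dimension count — by a combination of elementary polytope theory and an explicit one-parameter degeneration inside $(\C^*)^d$. Uniqueness is purely polytopal: if $F_1, F_2$ are faces of $Q$ with $A \cap F_1 = A \cap F_2$, then since the vertices of any face of a lattice polytope are exactly the vertices of the ambient polytope lying in that face, the vertex sets $\mathrm{vert}(F_i) = (A \cap F_i) \cap \mathrm{vert}(Q)$ agree, and $F_i = \mathrm{conv}(\mathrm{vert}(F_i))$ forces $F_1 = F_2$.

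For existence, let $p = \lim_n \Phi(\seq)$ and set $\mathbf{v}_n := (\log|z_{n,1}|, \ldots, \log|z_{n,d}|) \in \R^d$. After passing to a subsequence, either $\{\mathbf{v}_n\}$ is bounded (in which case $\seq$ has a convergent subsequence in $(\C^*)^d$ and $p \in X^0(Q)$), or $\|\mathbf{v}_n\| \to \infty$ with $\mathbf{v}_n/\|\mathbf{v}_n\| \to \mathbf{w}$ for some unit vector $\mathbf{w}$. Let $F_\mathbf{w}$ be the face of $Q$ maximizing $\langle \mathbf{w}, \cdot\rangle$, and choose $j_\ast$ (constant along a further subsequence) so that $|\seq^{m_{j_\ast}}|$ is maximal; then $m_{j_\ast} \in F_\mathbf{w}$. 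Dividing $\Phi(\seq)$ by its $j_\ast$ coordinate, each coordinate indexed by $m_j \notin F_\mathbf{w}$ has modulus $\exp(\|\mathbf{v}_n\|\langle\mathbf{w}, m_j - m_{j_\ast}\rangle + o(\|\mathbf{v}_n\|))$ with $\langle \mathbf{w}, m_j - m_{j_\ast}\rangle < 0$, so it vanishes in the limit. The surviving $F_\mathbf{w}$-coordinates of $p$ form a point of $X_{A \cap F_\mathbf{w}}$; inducting on $\dim F_\mathbf{w} < d$ (with base case $F_\mathbf{w}$ a vertex, where $X_{A \cap F_\mathbf{w}}$ is a single point) yields a unique face $F \subseteq F_\mathbf{w}$ of $Q$ with $p \in X^0(F)$.

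For the isomorphism $X(F) \cong X_{A \cap F}$, consider the coordinate projection $\pi_F$ from $\CP^{s-1}$ to $\CP^{|A \cap F|-1}$ selecting the $F$-coordinates (defined wherever those coordinates do not all vanish). Restricted to $X(F)$, this projection is a regular morphism and is injective (non-$F$ coordinates are identically zero on $X(F)$). Surjectivity onto $X_{A \cap F}$ is established by a degeneration: given $q = \lim_n \Phi_{A \cap F}(\seq)$, pick $\mathbf{w}$ in the relative interior of the normal cone of $F$, so that $\langle \mathbf{w}, m\rangle = c_F$ for every $m \in F$ while $c_F - \langle \mathbf{w}, m\rangle > 0$ for $m \in A \setminus F$. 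Then $\Phi(\seq \cdot \exp(t_n \mathbf{w}))$, divided projectively by $\exp(t_n c_F)$, retains the original $F$-coordinates of $\Phi(\seq)$ while multiplying each non-$F$ coordinate by $\exp(-c_j t_n)$ with $c_j > 0$; sending $t_n \to \infty$ fast enough compared to the (possibly unbounded) non-$F$ moduli of $\seq$ produces a projective limit in $X(F)$ whose image under $\pi_F$ is $q$. Since the inverse (extending by zero in non-$F$ coordinates) is also regular, $\pi_F$ is an isomorphism; in particular $X(F)$ is nonempty. The dimension of $X_{A \cap F}$ equals $d$ minus the dimension of the stabilizer of $\Phi_{A \cap F}$ in $(\C^*)^d$, equivalently the rank of the $\Z$-lattice generated by $\{m - m_0 : m \in A \cap F\}$ for any fixed $m_0 \in A \cap F$; since the vertices of $F$ lie in $A \cap F$ and affinely span $F$, this rank equals $\dim F$.

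The main obstacle is surjectivity of $\pi_F$: the degeneration parameter $t_n$ must be tuned to overcome possibly large non-$F$ moduli of $\seq^{m_j}$, and the compatibility relies on the strict positivity of each $c_j$, which in turn requires $\mathbf{w}$ to be in the \emph{relative interior} of the normal cone of $F$ rather than its boundary. The inductive existence argument is the same one-parameter degeneration applied successively inside smaller toric varieties, so once the $F_\mathbf{w}$-extraction is established the remainder proceeds uniformly.
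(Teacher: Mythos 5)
Your proposal is essentially a self-contained reconstruction of the standard toric-geometry facts that the paper does not prove at all: its ``proof'' of Lemma~\ref{faces} is a citation to Proposition~1.9 of Chapter~5 of GKZ and Proposition~2.1.6(b) of CLS. So you are not taking a different route from the paper so much as supplying the route the paper outsources. Your argument is the classical one — uniqueness via the fact that a face of a lattice polytope is the convex hull of its vertices, which lie in $A\cap F$; existence via renormalizing $\log|\seq|$, extracting a limiting direction $\mathbf{w}$, identifying the face $F_{\mathbf{w}}$ of the normal fan, and inducting on dimension; the isomorphism and dimension count via coordinate projection and the orbit--stabilizer computation for the torus action. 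All of these steps check out, including the subsequence bookkeeping and the rank computation $\operatorname{rank}\,\Z\{m-m_0\}=\dim F$.

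One step needs repair. In the surjectivity argument, the diagonal degeneration $\Phi(\seq\cdot\exp(t_n\mathbf{w}))$ produces a limit point $p\in X_A$ whose non-$F$ coordinates vanish and whose $F$-coordinates give $q$; but you then assert $p\in X(F)$, and $X(F)$ is \emph{defined} as the closure of $X^0(F)$, not as $X_A\cap\{x_j=0:\ m_j\notin F\}$. The approximating points lie in $X^0(Q)$, not $X^0(F)$, so membership of the limit in $\overline{X^0(F)}$ does not follow as stated (and proving that the linear slice equals the orbit closure is essentially the content being established). The fix is to run your own degeneration in two stages: for a fixed $\mathbf{u}\in(\C^*)^d$, the limit $\lim_{t\to\infty}e^{-tc_F}\Phi(\mathbf{u}e^{t\mathbf{w}})$ has $F$-coordinates $\mathbf{u}^{m_j}\neq 0$ and vanishing non-$F$ coordinates, hence lies in $X^0(F)$ literally by definition; this shows $\iota(\Phi_{A\cap F}((\C^*)^d))\subseteq X^0(F)$, and taking closures (the extension-by-zero map $\iota$ is a closed embedding) gives $\iota(X_{A\cap F})\subseteq X(F)$, which is the surjectivity you need. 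With that adjustment the proof is complete and correct.
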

\begin{proof}
	See Proposition 1.9 of Chapter 5 of \cite{GKZ} and Proposition 2.1.6(b) of \cite{CLS}. 
\end{proof}

\subsection{Which monomials converge as we move toward a face at infinity?}

Let $F$ be the face of $Q$ such that $p$ resides in the interior of the face at infinity corresponding to $F$, and let $\FF$ be the corresponding face of the original Newton polytope $\NP$. In addition, it will be useful to have notation for the polytope $Q$ and the face $F$ with one of the vertices of $F$ shifted to the origin, as well as for the convex cone of all directions that point from $F$ into $Q$.

\begin{definition}
	Let $\mathbf{v}$ be a vertex of $F$, and let $\mathfrak{v}$ be the corresponding vertex of $\NP$ (so that $\vv = \kappa \mathfrak{v}$). We define the following notation:
	\begin{enumerate}
		\item $\tilde{Q} = Q - \mathbf{v}$,
		\item $\tilde{F} = F - \mathbf{v}$,
		\item $\tilde{\NP} = \NP - \mathfrak{v}$,
		\item $\tilde{\FF} = \FF - \mathfrak{v}$,
		\item $L_F$ is the linear span of $\tilde{F}$ (this is the span of differences of points in $F$ and does not depend on which vertex $\mathbf{v}$ was chosen),
		\item $\pi(\tilde{Q})$ is the image of $\tilde{Q}$ in the quotient space $\mathbb{R}^d/L_F$ (which does not depend on $\mathbf{v}$ because the difference between any two of them is in $L_F$), and
		\item $\sigma_F$ is the nonnegative linear span of the preimage (under the quotient map) of $\pi(\tilde{Q})$ (and also does not depend on $\vv$).
	\end{enumerate}
\end{definition}

Regardless of the choice of $\mathbf{v}$, $\sigma_F$ is the set of all vectors that can be written as $\mathbf{x} + \mathbf{y}$ for some $\mathbf{x} \in L_F$ and $\mathbf{y} \in k\tilde{Q}$ for some sufficiently large positive integer $k$. Note that If $F$ has codimension 1, then $\sigma_F$ is a half-space.


\begin{lem}
	\label{convex}
	Suppose that the $k$ points $\mm_1, ..., \mm_k \in Q$ satisfy $\mm_1 + \cdots + \mm_k \in kF$. Then $\mm_1, ..., \mm_k \in F$.
\end{lem}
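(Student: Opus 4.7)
The plan is to exploit the standard characterization of a face of a convex polytope as the locus where a linear functional attains its maximum. Every face $F$ of $Q$ can be written as $F = \{x \in Q : \ell(x) = c\}$ for some affine linear functional $\ell : \R^d \to \R$ and constant $c$ with $\ell(x) \leq c$ for all $x \in Q$, with equality holding exactly on $F$. (When $F = Q$ one can take $\ell \equiv 0$, $c = 0$; otherwise $\ell$ is genuinely nonconstant on $Q$, and this is just the statement that $F$ arises from a supporting hyperplane.)

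First I would fix such a pair $(\ell, c)$. Then I would apply $\ell$ termwise: since each $\mm_j \in Q$, each summand satisfies $\ell(\mm_j) \leq c$, so
\[
\ell(\mm_1) + \cdots + \ell(\mm_k) \;\leq\; kc.
\]
On the other hand, the hypothesis $\mm_1 + \cdots + \mm_k \in kF$ means $(\mm_1 + \cdots + \mm_k)/k \in F$, so $\ell\bigl((\mm_1 + \cdots + \mm_k)/k\bigr) = c$, giving
\[
\ell(\mm_1) + \cdots + \ell(\mm_k) \;=\; kc.
\]
The combination of a sum of $k$ quantities each $\leq c$ being equal to $kc$ forces every individual term to equal $c$, so $\ell(\mm_j) = c$ for each $j$, and therefore $\mm_j \in F$ for each $j$.

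There is essentially no obstacle here; the whole content is the supporting hyperplane description of faces, which is a standard fact from convex geometry (see e.g.\ the face/exposed face discussion in any polytope reference, or \cite{CLS}). The only minor sanity check is the degenerate case $F = Q$, where the statement is trivial, so the nontrivial content assumes $F$ is a proper face and one uses an actual supporting functional. This argument uses only convexity of $Q$ and $F$ being a face, so it does not rely on normality, integrality, or anything specific to the toric setup; it is a self-contained convex-geometric lemma that will be invoked later when analyzing which monomials persist as one approaches the face at infinity $X(F)$.
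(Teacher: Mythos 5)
Your argument is correct and is essentially identical to the paper's proof: both take a supporting linear functional $\ell$ (the paper writes it as $N \cdot \mm$) that equals $c$ exactly on $F$ and is strictly less on $Q \setminus F$, apply it termwise, and conclude from the equality case of the averaged inequality. No differences worth noting.
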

\begin{proof}
	If $F = Q$ this is obvious, so we will assume $F \subsetneq Q$. Because $F$ is a proper face of $Q$, it has a supporting hyperplane whose intersection with $Q$ is exactly $F$, and such that for some $c \in \mathbb{R}$, the outward-pointing normal vector $N$ to this hyperplane satisfies $N \cdot \mm = c$ for all $\mm \in F$, and $N \cdot \mm < c$ for all $\mm \in Q \backslash F$. We have that $\frac{\mm_1 + \cdots + \mm_k}{k} \in F$, so $\frac{N \cdot \mm_1 + \cdots + N \cdot \mm_k}{k} = c$. But every term of the sum in the numerator is at most $c$, so all must be exactly $c$.
\end{proof}

\begin{lem}
	\label{normality}
	Suppose $F$ is a face of the normal convex lattice polytope $Q$. ($F$ could be all of $Q$, or a face of positive codimension.) Then any integer vector $\mathbf{x} \in L_F \cap \mathbb{Z}^d$ can be written as an integer linear combination of integer vectors in $\tilde{F} \cap \mathbb{Z}^d$, or alternatively as an integer linear combination of lattice points in $F \cap \mathbb{Z}^d$ with coefficients adding up to zero.
\end{lem}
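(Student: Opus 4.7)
The plan is to leverage normality of $F$, which holds because $F$ is a face of the normal polytope $Q$ and faces of normal polytopes are normal. The strategy has three steps: (i) find a positive integer $k$ and lattice points $\mathbf{a}, \mathbf{b} \in kF \cap \mathbb{Z}^d$ with $\mathbf{a} - \mathbf{b} = \mathbf{x}$; (ii) apply normality to split each of $\mathbf{a}$ and $\mathbf{b}$ as a sum of $k$ lattice points of $F$; (iii) subtract the two decompositions to express $\mathbf{x}$ as an integer combination of differences $\mathbf{f}_i - \mathbf{v}$, which by definition is an integer combination of elements of $\tilde{F} \cap \mathbb{Z}^d$.

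For step (i), the key observation to record is that $L_F \cap \mathbb{Z}^d$ is a full-rank sublattice of $L_F$: since $F$ is a lattice polytope, the differences of its integer vertices span $L_F$ over $\mathbb{R}$ and themselves lie in $L_F \cap \mathbb{Z}^d$. Consequently the set of integer points in the affine span of $k_0 F$, namely $k_0 \mathbf{v} + (L_F \cap \mathbb{Z}^d)$, is a full-rank lattice in that affine space. Because the relative interior of $k_0 F$ contains a relative ball whose radius grows linearly in $k_0$, for $k_0$ sufficiently large it must contain an integer point $\mathbf{p}$. Letting $\epsilon > 0$ be the relative distance from $\mathbf{p}$ to the relative boundary of $k_0 F$, one picks an integer $m$ with $m\epsilon > \|\mathbf{x}\|$; then both $m\mathbf{p}$ and $m\mathbf{p} + \mathbf{x}$ lie in the relative interior of $mk_0 F$ and are integer vectors. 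Taking $k := mk_0$, $\mathbf{b} := m\mathbf{p}$, and $\mathbf{a} := m\mathbf{p} + \mathbf{x}$ completes step (i).

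Steps (ii) and (iii) will then be immediate: normality of $F$ gives $\mathbf{a} = \mathbf{a}_1 + \cdots + \mathbf{a}_k$ and $\mathbf{b} = \mathbf{b}_1 + \cdots + \mathbf{b}_k$ with each $\mathbf{a}_i, \mathbf{b}_i \in F \cap \mathbb{Z}^d$, and subtracting yields
\[
\mathbf{x} \;=\; \sum_{i=1}^k (\mathbf{a}_i - \mathbf{v}) \;-\; \sum_{i=1}^k (\mathbf{b}_i - \mathbf{v}),
\]
an integer combination of elements of $\tilde{F} \cap \mathbb{Z}^d$ with coefficients in $\{-1, +1\}$. The equivalent formulation drops out by rewriting this as $\sum_i \mathbf{a}_i - \sum_i \mathbf{b}_i$, whose combined coefficients on lattice points of $F$ sum to $k - k = 0$. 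The main obstacle is step (i) -- producing a lattice point in the relative interior of some dilation of $F$ -- since everything after that is a direct unpacking of the definition of normality combined with the translation $\mathbf{y} \mapsto \mathbf{y} - \mathbf{v}$. The edge case $F = \{\mathbf{v}\}$, where $L_F = \{0\}$, is trivial.
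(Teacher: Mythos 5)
Your proof is correct, and it shares the overall skeleton of the paper's argument (realize $\mathbf{x}$ as a difference of two lattice points of some dilate $kF$, apply normality to decompose each into $k$ lattice points of $F$, and subtract), but the mechanism you use for the key step is genuinely different. The paper writes $\mathbf{x}$ as a rational combination of vertex differences of $F$, clears denominators to obtain $\bigl(\sum_k a_k \mathbf{v}_k\bigr) + c\mathbf{x} = \sum_k b_k \mathbf{v}_k$ with both sides in $KF$, and then uses convexity of $KF$ to interpolate and conclude that $\bigl(\sum_k a_k \mathbf{v}_k\bigr) + \mathbf{x} \in KF$; normality is applied once and the explicit vertex combination $\sum_k a_k \mathbf{v}_k$ is subtracted off. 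You instead produce the two points by a lattice-geometry argument: since $L_F \cap \mathbb{Z}^d$ is full rank in $L_F$ and the relative inradius of $k_0 F$ grows linearly, some dilate contains a lattice point $\mathbf{p}$ deep in its relative interior, and a further dilation by $m$ puts both $m\mathbf{p}$ and $m\mathbf{p} + \mathbf{x}$ inside $mk_0F$; you then apply normality twice (to $F$ itself, using the fact stated in the paper's setup that faces of normal polytopes are normal, which in the paper is what Lemma~\ref{convex} delivers). Both routes are valid. The paper's version is more elementary in that it needs no metric notions (covering radius, distance to the relative boundary) and no existence of interior lattice points in dilates, only convexity; yours is arguably more transparent geometrically, yields coefficients in $\{\pm 1\}$, and makes the zero-sum formulation immediate as $k - k = 0$. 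You correctly isolate the degenerate case $F = \{\mathbf{v}\}$, where the relative-ball argument would not apply but the claim is vacuous.
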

\begin{proof}
	Let $g$ be the dimension of $F$. Then it is possible to choose vertices $\mathbf{v}_0, \mathbf{v}_1, ..., \mathbf{v}_g$ of $F$ such that $\{\mathbf{v}_k - \mathbf{v}_0 : 1 \leq k \leq g\}$ is a basis for $L_F$. Therefore, $\mathbf{x}$ can be recovered as a rational linear combination of the vectors $\mathbf{v}_k - \mathbf{v}_0 \in \tilde{F} \cap \mathbb{Z}^d$ for $1 \leq k \leq g$, or alternatively, a rational linear combination of the $\mathbf{v}_k \in F \cap \mathbb{Z}^d$ for $0 \leq k \leq g$ with the coefficients adding up to zero. Clearing denominators and rearranging terms to get plus signs, we get that there exist nonnegative integers $a_k, b_k$ (for $0 \leq k \leq g$) and a positive integer $c$ such that 
	
	$$\left(\sum_{k=0}^g a_k \mathbf{v}_k\right) + c \mathbf{x} = \left(\sum_{k=0}^g b_k \mathbf{v}_k\right)$$
	
	and such that $(\sum_{k=0}^g a_k) = (\sum_{k=0}^g b_k)$. Let $K$ be this common sum.
	
	By convexity of $KF$, we have the following:
	$$\left(\sum_{k=0}^g a_k \mathbf{v}_k\right) \in KF$$
	$$\left(\sum_{k=0}^g a_k \mathbf{v}_k\right) + c \mathbf{x} = \left(\sum_{k=0}^g b_k \mathbf{v}_k\right) \in KF$$
	Therefore, again by convexity,
	$$\left(\sum_{k=0}^g a_k \mathbf{v}_k\right) + \mathbf{x} \in KF.$$
	
	By normality of the polytope $Q$, $\left(\sum_{k=0}^g a_k \mathbf{v}_k\right) + \mathbf{x}$ can be written as the sum of exactly $K$ lattice points $\mathbf{w}_1, ..., \mathbf{w}_K$ in $\mathbb{Z}^d \cap Q$, and by Lemma \ref{convex}, these $K$ lattice points are actually in $\mathbb{Z}^d \cap F$. (These are not necessarily vertices of $F$.) Therefore, subtracting $\sum_{k=0}^g a_k \mathbf{v}_k$, we have that
	
	$$\mathbf{x} = \sum_{j=1}^K \mathbf{w}_j - \sum_{k=0}^g a_k \mathbf{v}_k$$
	
	can be written as an \textit{integer} linear combination of lattice points in $\mathbb{Z}^d \cap F$ with coefficients adding up to $K - \sum_{k=0}^g a_k = 0$, so if $\mathbf{v}$ is any vertex of $F$, then
	
	$$\mathbf{x} = \sum_{j=1}^K (\mathbf{w}_j - \mathbf{v}) - \sum_{k=0}^g a_k (\mathbf{v}_k - \mathbf{v})$$
	
	is an integer linear combination of integer vectors in $\tilde{F} \cap \mathbb{Z}^d$ as desired.
	
\end{proof}

\begin{corollary}
	When $Q$ is normal, the map $\Phi : (\mathbb{C}^*)^d \rightarrow \mathbb{CP}^{s-1}$ is injective on $(\mathbb{C}^*)^d$. [Note: $\Phi$ is not necessarily injective on all of $\mathbb{C}^d$, if it is even defined there.]
\end{corollary}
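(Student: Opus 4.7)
The plan is to reduce injectivity of $\Phi$ to a statement about which monomials can distinguish points in $(\C^*)^d$, and then invoke Lemma \ref{normality} with $F = Q$. Suppose $\Phi(\zz) = \Phi(\ww)$ for some $\zz, \ww \in (\C^*)^d$. By definition of projective coordinates, there is some $\lambda \in \C^*$ with $\zz^{\mm_j} = \lambda \ww^{\mm_j}$ for every $j = 1, \ldots, s$. Setting $\mathbf{u} = \zz/\ww$ (coordinatewise), taking ratios of any two of these equations kills $\lambda$ and yields
\[
\mathbf{u}^{\mm_j - \mm_k} = 1 \quad \text{for all } j, k.
\]
So $\mathbf{u}^{\mathbf{x}} = 1$ holds for every $\mathbf{x}$ in the subgroup $\Lambda \subseteq \Z^d$ generated by the differences $\mm_j - \mm_k$, equivalently the subgroup generated by $\tilde{Q} \cap \Z^d$.

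Next I would apply Lemma \ref{normality} with $F = Q$. Since we assumed $\NP$ has full dimension $d$, so does $Q = \kappa \NP$, and therefore $L_Q = \R^d$ and $L_Q \cap \Z^d = \Z^d$. The lemma then says that every integer vector $\mathbf{x} \in \Z^d$ is an integer linear combination of vectors in $\tilde{Q} \cap \Z^d$; that is, $\Lambda = \Z^d$. Consequently $\mathbf{u}^{\mathbf{x}} = 1$ for every $\mathbf{x} \in \Z^d$, and specializing to the standard basis vectors $\mathbf{x} = \mathbf{e}_i$ gives $u_i = 1$ for every $i$. Thus $\mathbf{u} = \mathbf{1}$, i.e., $\zz = \ww$.

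There is no real obstacle beyond correctly setting up the reduction; the substantive content is entirely contained in Lemma \ref{normality}, which is where normality of $Q$ is used. The only subtlety worth flagging in the write-up is the remark already present in the statement: this argument genuinely requires $\zz, \ww \in (\C^*)^d$, because we needed to form the ratio $\mathbf{u} = \zz/\ww$ coordinatewise, and we needed to evaluate $\mathbf{u}^{\mathbf{x}}$ for arbitrary (possibly negative) integer exponents $\mathbf{x} \in \Z^d$. On the coordinate hyperplanes this division and these negative powers are not defined, which is exactly why $\Phi$ need not be injective on all of $\C^d$.
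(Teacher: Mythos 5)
Your proof is correct and takes essentially the same route as the paper's: both hinge on applying Lemma \ref{normality} with $F = Q$ (using that $L_Q = \R^d$ since $Q$ is full-dimensional) to write each standard basis vector as an integer combination of the lattice points $\mm_j$ with coefficients summing to zero, which is precisely what makes the corresponding Laurent monomial well-defined on projective classes. The paper phrases this as explicitly reconstructing $t_1,\ldots,t_d$ from $\Phi(\mathbf{t})$ via $\mathbf{p}^{\mathbf{n}}$, whereas you phrase it as showing the coordinatewise ratio $\mathbf{u}=\zz/\ww$ of two preimages satisfies $\mathbf{u}^{\mathbf{e}_i}=1$; these are the same computation.
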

\begin{proof}
	Setting $F = Q$ in Lemma \ref{normality} and noting that $L_Q$ is all of $\mathbb{R}^d$, we see that the standard basis vector $\mathbf{e}_1$ can be written as an integer linear combination of lattice points in $Q$ with coefficients adding up to zero, so $\mathbf{e}_1 = \sum_{j=1}^s n_j \mm_j$, with $\sum_{j=1}^s n_j = 0.$
	
	Now suppose $\mathbf{p} \in \Phi((\mathbb{C}^*)^d)$. Then it is of the form $[C \mathbf{t}^{\mathbf{m}_1} : ... : C \mathbf{t}^{\mathbf{m}_s}]$, where $C, t_1, ..., t_d \neq 0$. Setting $\mathbf{n} = (n_1, ..., n_s)$, we can see that $$\mathbf{p}^\mathbf{n} = C^{\sum_{j = 1}^s n_j} \mathbf{t}^{\sum_{j = 1}^s n_j \mathbf{m}_j} = C^0 \mathbf{t}^{\mathbf{e}_1} = t_1,$$ regardless of $C$. We can similarly recover the remaining coordinates $t_2, ..., t_d$ (repeating the process with $\mathbf{e}_1$ replaced with each of the remaining standard basis vectors), and the point in $(\mathbb{C}^*)^d$ that maps to $\mathbf{p}$ is therefore unique.
\end{proof}

It will be helpful to view $(\mathbb{C}^*)^d$ as a subset of $X_A$, with the injection $\Phi$ acting as an embedding. All Laurent monomials are defined at points in $(\mathbb{C}^*)^d$; it will be helpful to determine which Laurent monomials can be extended continuously to the interior of certain faces at infinity in $X_A$. The following lemma defines the value of certain monomials at points at infinity $p \in X_A$ by taking limits of monomials evaluated at points on sequences $\sequ$ in $(\mathbb{C}^*)^d$ such that $\Phi(\seq)$ converges to $p$. Specifically, if $p$ is in the interior of the face at infinity in $X_A$ corresponding to a face $F$ of $Q$, then $p^{\mathbf{m}}$ can be defined whenever $\mathbf{m} \in \sigma_F$, and it is nonzero whenever $\mathbf{m} \in L_F$. Note that the notation $p^{\mathbf{m}}$ does not conflict with the usual multinomial power notation because $p \in \mathbb{CP}^{s-1}$ and $\mathbf{m} \in \mathbb{Z}^d$ have $s$ and $d$ coordinates respectively, and $s > d$ whenever $Q$ has dimension $d$.

\begin{thm}[Which Laurent monomials converge]
	\label{monomials}
	Suppose that $\sequ$ is a sequence in $(\mathbb{C}^*)^d$ with $\Phi(\seq)$ converging to a point $p \in X_A$ in the interior of the face at infinity corresponding to $F$.
	\begin{enumerate}
		\item Then $\mathbf{z}_n^{\mathbf{m}}$ converges to a finite nonzero value $p^{\mathbf{m}}$ (dependent on $p$ but not on the sequence) for all integer vectors $\mm \in \tilde{F}$, and if $\mm$ is any other integer vector in $\tilde{Q}$, then $\mathbf{z}_n^{\mathbf{m}}$ converges to $p^{\mathbf{m}} = 0$.
		\item In fact, $\mathbf{z}_n^{\mathbf{m}}$ converges to a finite nonzero value $p^{\mathbf{m}}$ (dependent on $p$ but not on the sequence) for all integer vectors $\mm \in L_F$, and if $\mm$ is any other integer vector in $\sigma_F$, then $\mathbf{z}_n^{\mathbf{m}}$ converges to $p^{\mathbf{m}} = 0$.
		
	\end{enumerate}
\end{thm}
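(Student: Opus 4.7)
The plan is to reduce projective convergence to affine convergence by dividing out one distinguished coordinate, prove part~(1) directly, and then bootstrap to part~(2) via Lemmas~\ref{convex} and~\ref{normality}.

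For part~(1), fix a vertex $\mathbf{v}$ of $F$; this is a lattice point, so $\mathbf{v}\in A$ and the $\mathbf{v}$-coordinate of $p$ is nonzero by Lemma~\ref{faces}. Rescale $p$ to its unique affine representative $\bigl(p^{(\mathbf{v})}_j\bigr)_{j=1}^s$ with $\mathbf{v}$-entry equal to $1$; then $p^{(\mathbf{v})}_j\neq 0$ iff $\mm_j\in F$. Projective convergence $\Phi(\seq)\to p$ yields the affine convergence
\[
\Phi(\seq)/\seq^{\mathbf{v}} \;=\; \bigl(\seq^{\mm_j-\mathbf{v}}\bigr)_{j=1}^s \;\longrightarrow\; \bigl(p^{(\mathbf{v})}_j\bigr)_{j=1}^s.
\]
Every integer vector $\mm\in\tilde{Q}$ equals $\mm_j-\mathbf{v}$ for the unique lattice point $\mm_j:=\mm+\mathbf{v}\in Q\cap\Z^d$, so $\seq^{\mm}\to p^{(\mathbf{v})}_j$, which is nonzero precisely when $\mm\in\tilde{F}$. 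Independence of the limit from the sequence is automatic since $\seq^{\mm}$ is a sequence-independent function on $(\C^*)^d$; consistency across different vertex choices follows from the Laurent identity $\seq^{\mm+\mathbf{v}}\seq^{\mathbf{v}'}=\seq^{\mm+\mathbf{v}'}\seq^{\mathbf{v}}$.

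For the nonzero case of part~(2), let $\mm\in L_F\cap\Z^d$. Lemma~\ref{normality} writes $\mm=\sum_k c_k\mathbf{w}_k$ with $c_k\in\Z$ and $\mathbf{w}_k\in\tilde{F}\cap\Z^d$, so by part~(1),
\[
\seq^{\mm}=\prod_k\bigl(\seq^{\mathbf{w}_k}\bigr)^{c_k}\;\longrightarrow\;\prod_k\bigl(p^{\mathbf{w}_k}\bigr)^{c_k}\neq 0.
\]
For the vanishing case, suppose $\mm\in\sigma_F\cap\Z^d$ with $\mm\notin L_F$. Since $\sigma_F=L_F+\R_{\geq 0}\tilde{Q}$, the set of decompositions $\mm=\mathbf{x}+\mathbf{y}$ with $\mathbf{x}\in L_F$ and $\mathbf{y}\in\R_{\geq 0}\tilde{Q}$ is a nonempty rational polyhedron in $\R^{2d}$ and contains a rational point. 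Clearing denominators produces a positive integer $N$ and an integer decomposition $N\mm=\mathbf{x}'+\mathbf{y}'$ with $\mathbf{x}'\in L_F\cap\Z^d$ and $\mathbf{y}'\in\Z^d\cap k\tilde{Q}$ for some positive integer $k$ (using $0\in\tilde{Q}$ to get $\lambda\tilde{Q}\subseteq\lambda'\tilde{Q}$ whenever $0\leq\lambda\leq\lambda'$, so the real scale can be rounded up). Normality of $\tilde{Q}$ (inherited from $Q$ by translation) writes $\mathbf{y}'=\mathbf{w}_1+\cdots+\mathbf{w}_k$ with $\mathbf{w}_j\in\tilde{Q}\cap\Z^d$; by Lemma~\ref{convex}, if every $\mathbf{w}_j$ lay in $\tilde{F}$ then $\mathbf{y}'\in k\tilde{F}\subseteq L_F$ and $\mm\in L_F$, contradicting the hypothesis. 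Hence some $\mathbf{w}_j\in\tilde{Q}\setminus\tilde{F}$, so by part~(1), $\seq^{\mathbf{w}_j}\to 0$ while the other factors converge to finite limits, forcing $\seq^{\mathbf{y}'}\to 0$. Combined with $\seq^{\mathbf{x}'}$ tending to a nonzero limit from the $L_F$ case, $\seq^{N\mm}=\seq^{\mathbf{x}'}\seq^{\mathbf{y}'}\to 0$, so $|\seq^{\mm}|^N\to 0$ and thus $\seq^{\mm}\to 0$.

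The main obstacle is the rationality step in the vanishing case: promoting the real decomposition $\mm=\mathbf{x}+\mathbf{y}$ to an integer one in which $\mathbf{y}'$ sits inside an integer dilate $k\tilde{Q}$, so that normality can express $\mathbf{y}'$ as a sum of lattice points of $\tilde{Q}$ and Lemma~\ref{convex} can force at least one of them off $\tilde{F}$. Once this is in place, everything else is routine bookkeeping around part~(1) and the two earlier lemmas.
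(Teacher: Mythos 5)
Your proof is correct and takes essentially the same route as the paper's: part (1) by passing to the affine chart at a vertex of $F$, and part (2) by splitting $\mm$ into an $L_F$-part handled via Lemma~\ref{normality} and a part lying in an integer dilate of $\tilde{Q}$ handled via normality together with Lemma~\ref{convex}. The only difference is mechanical: where the paper rounds the $L_F$-coefficients to produce an exact integer decomposition of $\mm$ itself, you clear denominators by scaling $\mm$ to $N\mm$ and then conclude from $|\zz_n^{\mm}|^N = |\zz_n^{N\mm}| \to 0$; both are valid.
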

\begin{proof}
	Let $\mathbf{v}$ be one of the vertices of $F$, and assume without loss of generality that $\{\mm_1, ..., \mm_s\}$ is ordered so that $\mm_s = \mathbf{v}$. We first show part 1, that $\mathbf{z}_n^{\mathbf{m}}$ converges to a finite value for all $\mm$ in $\tilde{Q} = Q - \mathbf{v}$, and that value is nonzero if and only if $\mm$ is in $\tilde{F} = F - \mathbf{v}$.
	
	Let $P_1 = [\mathbf{z}_1^{\mathbf{m}_1} : ... : \mathbf{z}_1^{\mathbf{m}_s}], P_2 = [\mathbf{z}_2^{\mathbf{m}_1} : ... : \mathbf{z}_2^{\mathbf{m}_s}], ...$ be a sequence in the image of $\Phi$ that converges to $p \in \mathbb{CP}^{s-1}$. The point $p$ is in the face at infinity corresponding to $F$, so the last coordinate in $p$ (which corresponds to $\mathbf{v} = \mm_s$) is nonzero, so the same is true of $\Phi(\seq)$ for sufficiently large $n$; we can therefore choose the chart of $\mathbb{CP}^{s-1}$ in which we divide by the last coordinate. If we represent $p \in \mathbb{CP}^{d-1}$ as $p = [P_1 : ... : P_{s-1} : 1]$, we get that
	
	$$\left(\mathbf{z}_n^{\mathbf{m}_1 - \mathbf{v}}, ..., \mathbf{z}_n^{\mathbf{m}_s - \mathbf{v}}\right) \rightarrow (P_1, ..., P_{s-1}, 1).$$
	
	Because $p \in X^0(F)$, the nonzero coordinates of $p$ are precisely those in positions corresponding to lattice points in $\mathbb{Z}^d \cap F$, so $\mathbf{z}_n^{\mathbf{m}_j - \mathbf{v}}$ converges to a nonzero number when $\mathbf{m}_j \in F$ (equivalently, if $\mathbf{m}_j - \mathbf{v} \in \tilde{F}$), and zero otherwise. All of the lattice points in $\tilde{Q}$ are $\mm_j - \mathbf{v}$ for some $j \in \{1, ..., s\}$, so we see that for all $\mm \in \tilde{Q}$, $\mathbf{z}_n^{\mathbf{m}}$ converges to a finite value for all $\mm$ in $\tilde{Q}$, and that value is nonzero if and only if $\mm$ is in $\tilde{F}$. Furthermore, this value depends only on the coordinates of $p$ and not on the sequence used to approach $p$.
	
	For part 2, suppose that $\mm \in \mathbb{Z}^d \cap \sigma_F$. If $g$ is the dimension of $F$, then it is possible to choose vertices $\mathbf{v}_0, \mathbf{v}_1, ..., \mathbf{v}_g$ of $F$ such that $\{\mathbf{v}_k - \mathbf{v}_0 : 1 \leq k \leq g\}$ is a basis for $L_F$. Then $\mm$ can be written as $\mathbf{x} + \mathbf{y}$ for some $\mathbf{x} \in L_F$ and $\mathbf{y} \in k\tilde{Q}$ (where, in $\tilde{Q}$, we can choose $\mathbf{v} = \mathbf{v}_0$ without loss of generality) for some sufficiently large positive integer $k$. We need to show that $\mathbf{x}$ and $\mathbf{y}$ can be chosen to be \textit{integer} vectors. The vector $\mathbf{x} \in L_F$ can be written as a rational linear combination of vectors in $\tilde{F}$ as $\mathbf{x} = \sum_{j=1}^g a_j (\mathbf{v}_j - \mathbf{v})$. Now, simply let $\mathbf{x}_0 = \sum_{j=1}^g b_j (\mathbf{v}_j - \mathbf{v})$, where $b_j$ is the greatest integer less than or equal to $a_j$. Then $\mathbf{x}_0$ is an integer linear combination of vectors in $\tilde{F}$, so $\mathbf{x}_0 \in L_F \cap \mathbb{Z}^d$, and $\mathbf{x} - \mathbf{x}_0$ is a nonnegative linear combination of vectors in $\tilde{F} \subseteq \tilde{Q}$, so $\mathbf{x} - \mathbf{x}_0$ and also $\mathbf{y}_0 = \mathbf{x} - \mathbf{x}_0 + \mathbf{y}$ are in the nonnegative linear span of $\tilde{Q}$ and therefore in $k_0\tilde{Q}$ for some sufficiently large positive integer $k_0$. Because $\mathbf{x}_0 + \mathbf{y}_0 = \mathbf{x} + \mathbf{y} = \mm$, $\mathbf{y}_0$ is also an integer vector. Furthermore, because $\mathbf{x}_0 \in L_F$, we have that $\mm \in L_F$ if and only if $\mathbf{y}_0 \in L_F \cap k_0\tilde{Q} = k_0\tilde{F}$. (Any vector in $L_F$ will satisfy the equation of a supporting hyperplane of $\tilde{F}$ as a face of $\tilde{Q}$.)
	
	Next, we show that $\mathbf{z}^{\mathbf{x}_0}$ converges to a nonzero constant. By Lemma \ref{normality}, $\mathbf{x}_0$ can be written as $\sum_{j=1}^K c_j \mathbf{w}_j$ for some positive integer $K$, integers $c_j$, and integer vectors $\mathbf{w}_j \in \tilde{F} \cap \mathbb{Z}^d$. Therefore, the Laurent monomial $\mathbf{z}^{\mathbf{x}_0}$ can be written as a product $\prod_{j=1}^K (\mathbf{z}^{\mathbf{w}_j})^{c_j}$, where the monomials $\mathbf{z}^{\mathbf{w}_j}$ have been shown in part 1 to converge to finite nonzero values independent of the sequence chosen.
	
	Finally, because $\mathbf{y}_0$ is in $k_0\tilde{Q}$ for some positive integer $k_0$, then normality implies that it can be written as a sum of $k_0$ integer points $\mathbf{y}_j \in \tilde{Q}$. By Lemma \ref{convex}, they are all in $\tilde{F}$ if and only if $\mathbf{y}_0$ is in $k_0\tilde{F}$. Writing $\mathbf{y}_0 = \sum_{j=1}^{k_0} \mathbf{y}_j$, we see by part 1 that the product $\mathbf{z}^{\mathbf{y}_0} = \prod_{j=1}^{k_0} \mathbf{z}^{\mathbf{y}_j}$ will have all factors approaching finite values independent of the sequence, and the values will be all nonzero if and only if each $\mathbf{y}_j \in \tilde{F}$, which is true if and only if $\mathbf{y}_0$ is in $k_0\tilde{F}$ (that is, when $\mm \in L_F$).
	
	Therefore, $\mathbf{z^m} = \mathbf{z}^{\mathbf{x}_0} \mathbf{z}^{\mathbf{y}_0}$ approaches a nonzero finite value that depends on $p$ alone if $\mm \in L_F$, and zero when $\mm \in \sigma_F \backslash L_F$.
\end{proof}

WARNING: Not all monomials either converge for all sequences whose limit is $p$ or go to zero or infinity. For some, convergence behavior depends on the sequence. Later, we will see an example of a ``phantom" heighted CPAI in an unexpected direction, where the log-gradient direction does not converge to being parallel to the face.

An ACSV application immediately follows, namely the continuous extension of the height function $h_{\mathbf{r}}$ to $X^0(F)$ provided that $\mathbf{r} \in L_F$ (or in simpler language, provided that $\mathbf{r}$ is parallel to $F$).

\begin{corollary}
	\label{heightconf}
	Under the hypotheses of Lemma \ref{monomials}, whenever $\mathbf{r} \in \mathbb{Z}^d$ is parallel to $F$, the height function $h_{\mathbf{r}}(\seq)$ converges to a finite value that depends on $p$ but not on the sequence chosen.
\end{corollary}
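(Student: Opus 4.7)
The plan is to recognize this as essentially a one-line consequence of part~2 of Theorem~\ref{monomials}, rephrased through the identity $h_{\mathbf{r}}(\mathbf{z}) = -\log|\mathbf{z}^{\mathbf{r}}|$ that holds for any integer vector~$\mathbf{r}$.

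First I would write $h_{\mathbf{r}}(\mathbf{z}_n) = -\sum_{j=1}^d r_j \log|z_{n,j}| = -\log\bigl|\mathbf{z}_n^{\mathbf{r}}\bigr|$, which is valid because $\mathbf{r} \in \mathbb{Z}^d$ makes $\mathbf{z}_n^{\mathbf{r}}$ a well-defined Laurent monomial on $(\mathbb{C}^*)^d$. Next I would invoke the hypothesis that $\mathbf{r}$ is parallel to $F$, meaning $\mathbf{r} \in L_F \cap \mathbb{Z}^d$. Then part~2 of Theorem~\ref{monomials} applies directly: since $L_F \subseteq \sigma_F$ and $\mathbf{r} \in L_F$, the monomial $\mathbf{z}_n^{\mathbf{r}}$ converges to a finite \emph{nonzero} value $p^{\mathbf{r}}$ that depends only on $p$ and not on the particular sequence chosen.

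Finally, because $p^{\mathbf{r}} \neq 0$, the real number $-\log|p^{\mathbf{r}}|$ is finite, and continuity of $-\log|\cdot|$ on $\mathbb{C}^*$ gives $h_{\mathbf{r}}(\mathbf{z}_n) \to -\log|p^{\mathbf{r}}|$, with the limit depending only on $p$.

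There is essentially no obstacle here; the substance of the result was already absorbed into Theorem~\ref{monomials}. The only thing one has to be mildly careful about is making sure that $\mathbf{r}$ landing in $L_F$ (as opposed to merely $\sigma_F$) is exactly what prevents $p^{\mathbf{r}}$ from being zero, which would otherwise push the height to $+\infty$; this is the content of the nonvanishing half of part~2 and is the reason the hypothesis ``parallel to $F$'' (rather than just ``pointing into $F$'') is essential.
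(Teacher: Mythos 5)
Your proposal is correct and is essentially identical to the paper's own proof: both rewrite $h_{\mathbf{r}}(\mathbf{z}_n)$ as $-\log|\mathbf{z}_n^{\mathbf{r}}|$ and invoke part~2 of Theorem~\ref{monomials} with $\mathbf{r} \in L_F$ to get convergence to a finite nonzero limit depending only on $p$. Your added remark about why landing in $L_F$ rather than merely $\sigma_F$ is what keeps $p^{\mathbf{r}}$ nonzero is a correct and worthwhile clarification, but not a different argument.
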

\begin{proof}
	The height function $h_{\mathbf{r}}(\mathbf{z})$ is given by $-\sum_{j=1}^d r_k\log|z_k| = -\log|\mathbf{z^r}|$, so $h_{\mathbf{r}}(\seq)$ converges whenever the monomial $\mathbf{z}_n^{\mathbf{r}}$ converges to a finite nonzero value.
\end{proof}

Notice that we did not actually need normality for the convergence of the height function; we could get convergence of the modulus even if we only had the exponent vector as a \textit{rational} linear combination of the exponent vectors of monomials in $F$ with coefficients adding up to zero. However, we did need normality for the injectivity of $\Phi$; otherwise, the preimage of a point in the image of $\Phi$ could have finite cardinality greater than 1, which would mean that $\Phi$ could not be thought of as an embedding of $\sing^*$ into $X_A$, and $X_A$ would not be a compactification of $\sing^*$ in the traditional sense. Also, the convergence of the Laurent monomials themselves (not just their moduli) will come in handy when we examine convergence of the log-gradient vector, each of whose components is a linear combination of these monomials.

Finally, even if $\mathbf{r}$ has rational components and the monomial $\mathbf{z^r}$ is multivalued, its modulus is well-defined by taking positive real roots, so we also get convergence of the height function to a finite nonzero value when $\mathbf{r} \in \mathbb{Q}^d$ is parallel to $F$. Therefore, because any real number can be approximated by rationals, $h_{\mathbf{r}}$ converges to a finite nonzero value for all $\mathbf{r} \in \mathbb{R}^d$ that are parallel to $F$. (Note: The asymptotics in an irrational direction are defined as the limit of the asymptotics in neighboring directions. This idea will come up again later when considering the limitations of reducing the number of variables.)

\section{Log-Gradient Convergence: Generic Smooth Case}

Let $H$ be a polynomial, and let $V \subseteq \mathbb{C}^d$ be its zero set. The \textbf{log-gradient} of $H$, denoted $\nabla_{\log} H$, is defined as $\left(z_1 \frac{\partial H}{\partial z_1}, ..., z_d \frac{\partial H}{\partial z_d}\right)$ and is the normal to the hypersurface $H=0$ when the coordinates are changed to the natural logarithms of the original variables. If $V^* = V \cap (\mathbb{C}^*)^d$ is smooth, a critical point at infinity exists in the direction $\mathbf{r}$ when there is a sequence $\sequ$ with one or more coordinates approaching either zero or infinity, such that the projective directions of the log-gradient vectors approach the direction of $\mathbf{r}$. By compactness, there exists a subsequence such that $\Phi(\seq)$ converges in $X_A$ to some point $p$.

Multiplying $H$ by a monomial factor does not change the projective direction of the log-gradient vector for points in $(\mathbb{C}^*)^d$ satisfying $H = 0$. (However, it may change the log-gradient direction for points that do not satisfy $H = 0$.)

\begin{lem}
	\label{monomgradlog}
	Let $F$ be a multivariate polynomial, and let $m$ be a Laurent monomial. Then for points on the variety defined by $F$ with all nonzero coordinates, the log-gradient of $mF$ is projectively equivalent to the log-gradient of $F$; in fact, $\grad_{\log} (mF) = m \grad_{\log} F$ for points on $\sing^*$.
	
\end{lem}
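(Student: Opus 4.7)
The plan is to prove the identity $\nabla_{\log}(mF) = m\,\nabla_{\log} F$ on $V^*$ by a direct componentwise application of the product rule; the projective equivalence then follows because $m$ is a nonzero scalar at any point with all coordinates nonzero.

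Concretely, I would first fix an index $j \in \{1,\ldots,d\}$ and compute the $j$th component of $\nabla_{\log}(mF)$ using the product rule:
\begin{equation*}
z_j \frac{\partial (mF)}{\partial z_j} \;=\; z_j \frac{\partial m}{\partial z_j}\, F \;+\; m\, z_j \frac{\partial F}{\partial z_j}.
\end{equation*}
At any point $\zz \in V^*$, we have $F(\zz) = 0$, so the first summand vanishes and the expression reduces to $m(\zz)\, z_j \partial F / \partial z_j(\zz)$. Assembling the $d$ components yields $\nabla_{\log}(mF)(\zz) = m(\zz)\, \nabla_{\log} F(\zz)$, which is the stated equality.

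For the projective statement, I would then observe that $m$ is a Laurent monomial, so $m(\zz) \in \C^*$ for every $\zz \in (\C^*)^d$. Therefore the two vectors $\nabla_{\log}(mF)(\zz)$ and $\nabla_{\log} F(\zz)$ differ by a nonzero scalar on $V^*$, which is exactly the statement that they are projectively equivalent in $\CP^{d-1}$.

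There is essentially no obstacle here; the only subtlety worth flagging is the hypothesis that $\zz \in (\C^*)^d$, which is used twice: once to guarantee that $m(\zz) \neq 0$ (so the scalar relation gives a genuine projective equivalence and not the zero vector), and once implicitly to ensure that the Laurent monomial $m$ is well-defined and smooth at $\zz$ so that the product rule applies. The condition $F(\zz) = 0$ is essential in killing the $F$-term, which is why the conclusion may fail off $V^*$, matching the parenthetical caveat in the preceding paragraph of the paper.
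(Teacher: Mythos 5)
Your proof is correct and uses essentially the same idea as the paper: apply the product rule to $mF$ and use $F=0$ on the variety to kill the extra term, then note $m\neq 0$ on $(\C^*)^d$ for the projective statement. The only difference is cosmetic — the paper proves the case $m=z_j^{\pm 1}$ and iterates, whereas you handle a general Laurent monomial in one product-rule computation, which is slightly cleaner.
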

\begin{proof}
	We begin with the case in which $m$ is a single variable, say $z$. In this case, all coordinates besides the $z$-coordinate of the log-gradient of $zF$ are simply $z$ times the corresponding coordinate of the log-gradient of $F$. The $z$-component of the log-gradient of $F$ is $z \frac{\partial}{\partial z} (zF) = z^2 \frac{\partial F}{\partial z} + zF \cdot 1$ by the product rule. For points on the variety, $F$ evaluates to zero, so the result is equal to $z$ times the log-gradient of $F$ for points on the variety. Therefore, the log-gradient of $zF$ is projectively equivalent to the log-gradient of $F$ (specifically, $z$ times the log-gradient of $F$) for points on $V(F) \cap (\mathbb{C}^*)^d$, and (replacing $F$ with $z^{-1} F$) the same is true for multiplication by $z^{-1}$.
	
	Iterating this procedure gives the result for all Laurent monomials.
\end{proof}

\subsection{Standard Rescaling: Motivation}

It will frequently be the case that the log-gradient vector will have magnitude approaching infinity as the sequence goes to infinity with images in $X_A$ converging to $p$. So in order to show something about the limit of the direction of the log-gradient, we will need to focus on a particular way of rescaling the log-gradient vector to look at a limit in projective space; we will divide through all components of the log-gradient by a monomial $\mathbf{z}^{\mathfrak{v}}$ corresponding to one of the vertices of the original Newton polytope $\NP$. To understand why the rescaling we choose is (usually) correct, we take a look at a very simple example.

\begin{example}
	Suppose $H = 1 + x + x^2 + xy + x^2 y$, so that its log-gradient is given by the vector 
	$$(x + 2x^2 + xy + 2x^2 y, xy + x^2 y).$$
	In this example, the Newton polytope is the quadrilateral with vertices (0,0), (1,1), (2,1), and (2,0) and is already normal, and the equation $H=0$ can be solved for $y$ to give $y = \frac{-1-x-x^2}{x(1+x)}$. Suppose that $x$ is approaching -1, so that $y$ approaches infinity, and $\Phi(\mathbf{z}) = [1 : x : x^2 : xy : x^2 y]$ approaches $p = [0 : 0 : 0 : 1 : -1] \in X_A$, located on the face at infinity corresponding to the top edge of $\NP$; we denote this edge by $\mathcal{F}$. In our example, we simply divide through by $xy$ to produce $[y^{-1} + 2xy^{-1} + 1 + 2x : 1 + x]$ as a projective direction, which approaches $[0 + 0 + 1 + 2(-1) : 1 + (-1)] = [-1 : 0]$. (This is indeed parallel to the horizontal edge $\mathcal{F}$.)
\end{example}

 The idea is that, as we grow closer to $p$, monomials on the edge $\mathcal{F}$ (in this case, $xy$ and $x^2 y$) are ``dominant" compared to monomials in the rest of $\NP$ (in this case, 1, $x$, and $x^2$). Therefore, dividing every term by one of these ``dominant" monomials (say, $\mathbf{z}^{\mathfrak{v}}$ for $\mathfrak{v}$ some vertex of $\mathcal{F}$) should cause all terms to approach either zero or a nonzero constant, and because the log-gradient has the same monomials in it as the original polynomial (except for the lack of a constant term), there will exist monomials that do not approach zero. The only problem that could arise is that, with just the right choices of coefficients for $H$, the contributions of those ``dominant" monomials could happen to cancel, leaving the zero vector, meaning that we rescaled too far and have lost the information about the limiting projective direction. When a ``singularity at infinity" like this occurs, we have examples that have a positive-dimensional set of possible limiting projective directions for the log-gradient, depending on the sequence chosen (more on this later). But generically, there will be at least one component where the contributions of the ``dominant" monomials do not exactly cancel, and in such cases, it is sufficient to show that this rescaling of the log-gradient can be written as the sum of two components: one that is always parallel to $\mathcal{F}$ for every $\seq$, and one that approaches the zero vector. 

\subsection{If $\zz^{-\mathfrak{v}} \grad_{\log} H$ approaches a nonzero vector, then the limiting direction is parallel to F.}

As before, suppose that $\Phi(\seq)$ converges to a point $p \in X_A$ in the interior of the face at infinity corresponding to some face $F$ of $Q$ (and the corresponding face $\mathcal{F}$ of $\NP$). We choose a vertex $\mathfrak{v}$ of $\mathcal{F}$, let $\mathbf{v} = \kappa \mathfrak{v}$ be the corresponding vertex of $F$, and define the rescaled log-gradient as $\mathbf{z}^{-\mathfrak{v}} \nabla_{\log} H$. By the earlier lemmas, each of the monomials in $\mathbf{z}^{-\mathfrak{v}} \nabla_{\log} H$ will approach either zero or a nonzero constant, and for generic $H$, $\mathbf{z}^{-\mathfrak{v}} \nabla_{\log} H$ will approach a finite nonzero vector in the limit. When this occurs, we can show that this limiting vector is parallel to $F$.

\begin{thm}
	\label{genericresult}
	Suppose $\sequ$ is a sequence in $\sing^*$ such that $\Phi(\seq)$ converges to a point $p$ in the interior of the face at infinity corresponding to a face $F$ of $Q$. ($F$ need not be a facet, and it is impossible for it to be a vertex.) If $\mathbf{z}^{-\mathfrak{v}} \nabla_{\log} H$ does not approach the zero vector, then its limit is parallel to $F$.
\end{thm}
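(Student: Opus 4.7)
The plan is to expand
$$\mathbf{z}^{-\mathfrak{v}} \nabla_{\log} H \;=\; \sum_{j} c_j \, \mathbf{a}_j \, \mathbf{z}^{\mathbf{a}_j - \mathfrak{v}},$$
where $H = \sum_j c_j \mathbf{z}^{\mathbf{a}_j}$ is the expansion of $H$ over the lattice points $\mathbf{a}_j \in \NP \cap \mathbb{Z}^d$. I will split the sum by whether $\mathbf{a}_j$ lies in $\mathcal{F}$ or not, read off the pointwise monomial limits from Theorem~\ref{monomials}, and finally use the fact that $H$ vanishes identically on $\seq$ to cancel one remaining component of the limit.

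Each exponent $\mathbf{a}_j - \mathfrak{v}$ lies in $\tilde{\NP}$, which sits inside $\tilde{Q} = \kappa\tilde{\NP}$ (using $\kappa \ge 1$ and $0 \in \tilde{\NP}$ with convexity), and hence inside $\sigma_F$, so Theorem~\ref{monomials}(2) applies to every term. If $\mathbf{a}_j \in \mathcal{F}$, then $\mathbf{a}_j - \mathfrak{v} \in \tilde{\mathcal{F}} \subseteq L_F$, so $\mathbf{z}_n^{\mathbf{a}_j - \mathfrak{v}} \to p^{\mathbf{a}_j - \mathfrak{v}} \ne 0$. If $\mathbf{a}_j \in \NP \setminus \mathcal{F}$, then the defining property of $\mathcal{F}$ as a face gives $\NP \cap (\mathfrak{v} + L_F) = \mathcal{F}$, so $\mathbf{a}_j - \mathfrak{v} \notin L_F$, and the theorem yields $\mathbf{z}_n^{\mathbf{a}_j - \mathfrak{v}} \to 0$. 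Passing to the limit termwise,
$$\lim_{n\to\infty} \mathbf{z}_n^{-\mathfrak{v}} \nabla_{\log} H(\seq) \;=\; \sum_{\mathbf{a}_j \in \mathcal{F}} c_j \, p^{\mathbf{a}_j - \mathfrak{v}} \, \mathbf{a}_j.$$

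Rewriting each $\mathbf{a}_j = \mathfrak{v} + (\mathbf{a}_j - \mathfrak{v})$ expresses this limit as a scalar multiple of $\mathfrak{v}$ plus a vector automatically in $L_F$; the limit therefore lies in $L_F$ as soon as I show the scalar coefficient of $\mathfrak{v}$ vanishes. That coefficient equals $\lim_n \sum_{\mathbf{a}_j \in \mathcal{F}} c_j \mathbf{z}_n^{\mathbf{a}_j - \mathfrak{v}}$, and adding in the off-face terms (each tending to $0$) recovers $\mathbf{z}_n^{-\mathfrak{v}} H(\seq)$, which is identically zero since $\seq \in V^*$. Hence the limit vector lies in $L_F$, and the nonvanishing hypothesis upgrades this to ``nonzero and parallel to $F$.'' I expect the only real subtlety to be conceptual rather than computational: recognizing that the target must be the \emph{linear} span $L_F$ rather than the affine span of $\mathcal{F}$, and seeing that the only available mechanism to force us into the linear span is the identity $H(\seq) = 0$, which kills the otherwise-stubborn $\mathfrak{v}$-component.
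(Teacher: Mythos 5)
Your proof is correct and follows essentially the same route as the paper's: expand $\mathbf{z}^{-\mathfrak{v}}\nabla_{\log}H$ over the lattice points of $\NP$, use Theorem~\ref{monomials} to kill the off-face terms, and use $H(\seq)=0$ to annihilate the $\mathfrak{v}$-component so the limit lands in $L_F$. The only difference is bookkeeping order — the paper adds and subtracts $\mathbf{z}^{-\mathfrak{v}}(H-H_{\mathcal{F}})\mathfrak{v}$ so that one piece is parallel to $F$ at every finite $n$, whereas you pass to the limit first and then split off the $\mathfrak{v}$-coefficient — which is immaterial.
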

\begin{proof}
	First, notice that the log-gradient of a monomial is simply $$\nabla_{\log} \mathbf{z^m} = (m_1 \mathbf{z^m}, ..., m_d \mathbf{z^m}) = \mathbf{z^m m}.$$ So if $H = \sum_{\mathbf{m} \in \NP} c_{\mathbf{m}} \mathbf{z^m}$, then $\mathbf{z}^{-\mathfrak{v}} \nabla_{\log} H = \mathbf{z}^{-\mathfrak{v}} \sum_{\mathbf{m} \in \NP} c_{\mathbf{m}} \mathbf{z^m m} = \sum_{\mathbf{m} \in \NP} c_{\mathbf{m}} \mathbf{z^{m-\mathfrak{v}} m}$.
	
	Let $H_{\mathcal{F}}$ be the component of $H$ consisting of all terms whose monomials are in $\mathcal{F}$, so that $H - H_{\mathcal{F}}$ has all of the terms whose monomials are in $\NP \backslash \mathcal{F}$. We split this up into two components as follows:
	
	$C_1 = \sum_{\mathbf{m} \in \mathcal{F}} c_{\mathbf{m}} \mathbf{z^{m-\mathfrak{v}} m} + \mathbf{z^{-\mathfrak{v}}} (H - H_{\mathcal{F}}) \mathfrak{v}$
	
	$C_2 = \sum_{\mathbf{m} \in \NP \backslash \mathcal{F}} c_{\mathbf{m}} \mathbf{z^{m-\mathfrak{v}} m} - \mathbf{z^{-\mathfrak{v}}} (H - H_{\mathcal{F}}) \mathfrak{v}$
	
	It is clear that $C_1 + C_2 = \sum_{\mathbf{m} \in \NP} c_{\mathbf{m}} \mathbf{z^{m-\mathfrak{v}} m}$. Two claims must now be justified:
	
	\begin{enumerate}
		\item For every $\seq$, $C_1$ is parallel to $F$ (that is, $C_1$ is in $L_F$, which is also the linear span of differences of points in $\mathcal{F}$), and
		\item $C_2$ approaches zero.
	\end{enumerate}
	
	For the first claim, notice that $C_1$ is a linear combination of vectors in $\mathcal{F}$, where the sum of the coefficients is
	$$\sum_{\mathbf{m} \in \mathcal{F}} c_{\mathbf{m}} \mathbf{z^{m-\mathfrak{v}}} + \mathbf{z^{-\mathfrak{v}}} (H - H_{\mathcal{F}}) = \mathbf{z^{-\mathfrak{v}}} H_{\mathcal{F}} + \mathbf{z^{-\mathfrak{v}}} (H - H_{\mathcal{F}}) = \mathbf{z^{-\mathfrak{v}}} H,$$
	which is zero for all $\seq$ because $\seq \in V$. Therefore, for all $\seq$, $C_1$ is in the linear span of differences of vectors in $\mathcal{F}$, so it is in $L_F$ (and parallel to $F$).
	
	For the second claim, we will need to apply Lemma \ref{monomials} to show which monomials approach finite values and which approach zero. By construction, we have that $\tilde{Q} = Q - \mathbf{v} = \kappa (\NP - \mathfrak{v}) = \kappa \tilde{\NP} \supseteq \tilde{\NP}$ (because $\tilde{\NP}$ has the origin as a vertex), and similarly, $\tilde{F} = \kappa \tilde{\mathcal{F}} \supseteq \tilde{\mathcal{F}}$. From this, we see from Lemma $\ref{monomials}$ that when $\mathbf{m} \in \mathcal{F}$ (or equivalently, $\mathbf{m} - \mathfrak{v} \in \tilde{\mathcal{F}} \subseteq \tilde{F}$), $\mathbf{z^{m-\mathfrak{v}}}$ approaches a finite nonzero number, and when $\mathbf{m} \in \NP \backslash \mathcal{F}$ (or equivalently, $\mathbf{m} - \mathfrak{v} \in \tilde{\NP} \backslash \tilde{\mathcal{F}} \subseteq \tilde{Q} \backslash \tilde{F}$), $\mathbf{z^{m-\mathfrak{v}}}$ approaches zero. However, because $H - H_{\mathcal{F}}$ contains only the terms of $H$ whose monomials are in $\NP \backslash \mathcal{F}$, $C_2$ is a sum of finitely many terms all of whose monomials are of the form $\mathbf{z^{m-\mathfrak{v}}}$ for $\mathbf{m} \in \NP \backslash \mathcal{F}$, so all of these terms (and $C_2$ itself) approach zero.
	
\end{proof}







We know that when $p$ is a point in the interior of the face at infinity corresponding to the face $F$ of $Q$, and $\mathbf{m}$ is a lattice point in the corresponding face $\mathcal{F}$ of $\NP$, the limiting value of $\mathbf{\seq^{m-\mathfrak{v}}}$ (where $\mathfrak{v}$ is a vertex of $\mathcal{F}$ as before) when $\seq$ approaches $p$ is nonzero and depends only on $p$, and not on the sequence $\sequ$ used to approach $p$. For the following theorem, it will be useful to define $p^{\mathbf{m-\mathfrak{v}}}$ to be that limiting value.

\begin{thm}
	\label{genericity}
	The hypotheses of Theorem \ref{genericresult} are generic; that is, if we fix a Newton polytope $\NP$, then for generic coefficients $c_{\mathbf{m}}$ of $H$, $\mathbf{z}^{-\mathfrak{v}} \nabla_{\log} H$ does not approach the zero vector at any point at infinity $p \in X_A$ that is the limit of $\Phi(\seq)$ for some sequence $\sequ$. (This means that, for a given Newton polytope $\NP$, the space of possible Laurent polynomials $H$ such that there exists a point $p$ at infinity in $X_A$ at which the normalized log-gradient under the standard rescaling converges to the zero vector, has positive codimension in the space of all Laurent polynomials with Newton polytope $\NP$.)
\end{thm}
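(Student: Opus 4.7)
The plan is to decompose the bad set over the finitely many faces of $\NP$ and then perform a parameter count on each piece. For each proper face $\mathcal{F}$ of $\NP$ with $1 \leq g := \dim \mathcal{F} \leq d-1$ (vertices being excluded by Theorem \ref{genericresult}, and $\mathcal{F} = \NP$ not being a face at infinity), let $B_{\mathcal{F}} \subseteq \C^N$ (where $N = |\NP \cap \Z^d|$) denote the set of coefficient vectors $(c_{\mathbf{m}})$ for which there exists $p \in X^0(F)$ (with $F = \kappa \mathcal{F}$) that is both a limit of $\Phi(\seq)$ for some $\seq \subset V^*$ and a zero of the limiting rescaled log-gradient. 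It suffices to show each $B_{\mathcal{F}}$ has positive codimension in $\C^N$, since a finite union of positive-codimension sets still has positive codimension.

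The key step is to write both conditions as linear conditions on the $c_{\mathbf{m}}$ for fixed $p$. Choose a vertex $\mathfrak{v}$ of $\mathcal{F}$. From the proof of Theorem \ref{genericresult} together with Theorem \ref{monomials}, the limit of $\mathbf{z}^{-\mathfrak{v}} \nabla_{\log} H$ at $p$ equals $\sum_{\mathbf{m} \in \mathcal{F} \cap \Z^d} c_{\mathbf{m}} p^{\mathbf{m} - \mathfrak{v}} \mathbf{m}$, with all $p^{\mathbf{m} - \mathfrak{v}}$ nonzero. The reachability condition reduces similarly: applying $\mathbf{z}_n^{-\mathfrak{v}} H(\mathbf{z}_n) = 0$ and isolating the face terms via Theorem \ref{monomials} yields $\sum_{\mathbf{m} \in \mathcal{F} \cap \Z^d} c_{\mathbf{m}} p^{\mathbf{m} - \mathfrak{v}} = 0$. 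Substituting this into the vanishing condition rewrites the latter as $\sum_{\mathbf{m} \in \mathcal{F} \cap \Z^d} c_{\mathbf{m}} p^{\mathbf{m} - \mathfrak{v}} (\mathbf{m} - \mathfrak{v}) = 0$ in $L_{\mathcal{F}}$, a $g$-dimensional space. Together the two conditions form a linear system of $1 + g$ scalar equations on the coefficients indexed by $\mathcal{F} \cap \Z^d$.

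The central computation is that this system has full rank $1 + g$ for every $p \in X^0(F)$. Factoring the nonzero scalars $p^{\mathbf{m} - \mathfrak{v}}$ out of each column, the matrix has columns $(1, \mathbf{m} - \mathfrak{v}) \in \R \oplus L_{\mathcal{F}} \cong \R^{1+g}$. Since $\dim \mathcal{F} = g$, one can find vertices $\mathfrak{v} = \mathbf{m}_0, \mathbf{m}_1, \ldots, \mathbf{m}_g$ of $\mathcal{F}$ such that $\mathbf{m}_1 - \mathfrak{v}, \ldots, \mathbf{m}_g - \mathfrak{v}$ are linearly independent in $L_{\mathcal{F}}$, so the corresponding $g+1$ columns span $\R^{1+g}$ and the matrix has rank $1 + g$. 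Consequently, for each $p$ the set of offending coefficients in $\C^N$ is an affine subspace of complex codimension $1 + g$ (the coefficients $c_{\mathbf{m}}$ for $\mathbf{m} \notin \mathcal{F}$ remain unconstrained). Forming the incidence variety
$$\mathcal{I}_{\mathcal{F}} = \{(c, p) \in \C^N \times X^0(F) : c \text{ satisfies both equations at } p\},$$
and using that $X^0(F)$ has complex dimension $g$ by Lemma \ref{faces}, we get $\dim \mathcal{I}_{\mathcal{F}} = (N - 1 - g) + g = N - 1$. The projection $B_{\mathcal{F}} \subseteq \pi(\mathcal{I}_{\mathcal{F}})$ therefore has complex codimension at least $1$ in $\C^N$, completing the argument.

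The main obstacle is the uniform rank-$(1+g)$ claim: one must rule out any degeneration of the column-span as $p$ varies through $X^0(F)$. The crucial input is Theorem \ref{monomials}, which guarantees that $p^{\mathbf{m} - \mathfrak{v}} \neq 0$ throughout the whole face interior $X^0(F)$, so that column scaling preserves rank everywhere. Once this is secured, the remaining ingredients (reducing to finitely many faces, and bounding the projected image of an incidence variety by its dimension) are standard.
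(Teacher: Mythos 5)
Your proposal is correct and follows essentially the same route as the paper: both derive the two necessary linear conditions $\sum_{\mathbf{m}\in\mathcal{F}} c_{\mathbf{m}} p^{\mathbf{m}-\mathfrak{v}} = 0$ and $\sum_{\mathbf{m}\in\mathcal{F}} c_{\mathbf{m}} p^{\mathbf{m}-\mathfrak{v}}\mathbf{m} = 0$, show the combined system has rank $g+1$ using $g+1$ affinely independent lattice points of $\mathcal{F}$ and the nonvanishing of the $p^{\mathbf{m}-\mathfrak{v}}$, and then add $\dim X^0(F) = g$ to conclude the bad locus has codimension at least $1$, finishing with a union over the finitely many faces. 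Your incidence-variety phrasing is just a cleaner formalization of the paper's "union of null spaces" dimension count.
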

\begin{proof}
	First, we will fix a specific $p$. We saw above in the proof of the second claim of Theorem \ref{genericresult} that $C_2$ and $\mathbf{z^{-\mathfrak{v}}} (H - H_{\mathcal{F}}) \mathfrak{v}$ approach zero, so as $\seq$ approaches $p$, we have that $\mathbf{z}^{-\mathfrak{v}} \nabla_{\log} H$ approaches the same vector as $\sum_{\mathbf{m} \in \mathcal{F}} c_{\mathbf{m}} \mathbf{z^{m-\mathfrak{v}} m}$, which is $\sum_{\mathbf{m} \in \mathcal{F}} c_{\mathbf{m}} p^{\mathbf{m-\mathfrak{v}}} \mathbf{m}$. Therefore, in order for $\mathbf{z}^{-\mathfrak{v}} \nabla_{\log} H$ to approach the zero vector, we must have that the coefficients of $H$ satisfy $$\sum_{\mathbf{m} \in \mathcal{F}} c_{\mathbf{m}} p^{\mathbf{m-\mathfrak{v}}} \mathbf{m} = 0.$$ Similarly, because $\sequ$ is contained in $\sing$, we have that for every $\seq$, $\mathbf{z}_n^{-\mathfrak{v}} H(\seq) = \sum_{\mathbf{m} \in \NP} c_{\mathbf{m}} \mathbf{z}^{\mathbf{m-\mathfrak{v}}} = 0$, and because $\mathbf{z^{m-\mathfrak{v}}}$ approaches zero when $\mathbf{m} \in \NP \backslash \mathcal{F}$, we must have that $$\sum_{\mathbf{m} \in \mathcal{F}} c_{\mathbf{m}} p^{\mathbf{m-\mathfrak{v}}} = 0$$ in order for $p$ to be a limit of a sequence contained in $\sing$. Combining these, we get $$\sum_{\mathbf{m} \in \mathcal{F}} c_{\mathbf{m}} p^{\mathbf{m-\mathfrak{v}}} \begin{bmatrix} \mathbf{m} \\ 1 \end{bmatrix} = 0.$$
	
	For this specific $p$, this condition means that the coefficient vector $\mathbf{c}$ of $H$ lies in the null space of the matrix $B_p$ whose columns are $p^{\mathbf{m-v}} \begin{bmatrix} \mathbf{m} \\ 1 \end{bmatrix}$. Let $s$ be the number of lattice points in $\mathcal{F}$ (which is the number of entries in $\mathbf{c}$), and let $j$ be the dimension of $\mathcal{F}$. Then because $\mathcal{F}$ has dimension $j$, there are $j+1$ affinely independent lattice points in $\mathcal{F}$, so there are $j+1$ linearly independent vectors among the $\begin{bmatrix} \mathbf{m} \\ 1 \end{bmatrix}$, and because all of the $p^{\mathbf{m-\mathfrak{v}}}$ are nonzero, $B_p$ has rank $j+1$, so its null space has dimension $s-(j+1)$. By Lemma \ref{faces}, the face at infinity corresponding to $F$ has dimension $j$, the same as the dimension of $F$, so the set of coefficient vectors $\mathbf{c}$ for which there exists such a $p$ for this face $F$ is the union of these null spaces and has dimension at most $s-(j+1) + j = s-1$. Finally, taking the union over the finitely many faces $F$ of $Q$ does not change the dimension, so the condition is generic.
\end{proof}

This genericity will turn out to be quite important. We will later see an example of a heighted CPAI in an unexpected direction (not parallel to any proper face of $Q$) in a case when the generic condition manages to fail at a point $p$ in the face at infinity corresponding to a face $F$ of codimension 2.

\subsection{For $H$ satisfying the generic condition, CPAI's can only occur in directions parallel to proper faces of $Q$ and must be heighted.}
We now show that convergence of the height function and convergence of the log-gradient should imply that all CPAI are heighted.

\begin{corollary}
	\label{genericheighted}
	Suppose that the rescaled log-gradient never vanishes on the toric compactification of the variety at any face at infinity. Then our previous results imply that any critical point at infinity on such a variety must be heighted and in a direction parallel to some face $F$ of the Newton polytope $\NP$ of $H$.
\end{corollary}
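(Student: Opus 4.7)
The plan is to chain the earlier results together: given a CPAI, use the compactness of $X_A$ to extract a subsequence whose images converge to a point $p$ in some face at infinity $X^0(F)$, use Theorem \ref{genericresult} to pin down the limiting direction of the (rescaled) log-gradient, and then invoke Corollary \ref{heightconf} to conclude convergence of the height.

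More precisely, I would start with a witness sequence $(\zz_n,\rr_n)$ for a CPAI, so $\zz_n \in V^*$, $\rr_n \in N_{\zz_n}(V^*)$ is (projectively) the log-gradient $\nabla_{\log} H(\zz_n)$, and $(\zz_n,\rr_n) \to (\zz_*,\rr_*)$ with $\zz_* \notin V^*$. Since $X_A$ is compact, after passing to a subsequence $\Phi(\zz_n) \to p \in X_A$. Because $\zz_*$ is not an affine point of $V^*$ and $\Phi$ is an embedding of $(\C^*)^d$, the limit point $p$ must lie in $X^0(F)$ for some proper face $F$ of $Q$; let $\mathcal{F}$ be the corresponding face of $\NP$ and $\mathfrak{v}$ a vertex of $\mathcal{F}$.

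Now apply the standard rescaling from Section~3: Lemma \ref{monomgradlog} says $\mathbf{z}_n^{-\mathfrak{v}} \nabla_{\log} H(\zz_n)$ is projectively equal to $\nabla_{\log} H(\zz_n)$, hence to $\rr_n$, so its projective limit is $\rr_*$. By the standing hypothesis of the corollary, this rescaled log-gradient does not approach the zero vector at $p$, so Theorem \ref{genericresult} applies and the limit vector lies in $L_F$. Consequently the projective limit $\rr_*$ is parallel to $F$, and since $L_F$ is also the linear span of $\tilde{\mathcal{F}}$, $\rr_*$ is parallel to $\mathcal{F}$, a face of $\NP$, as required.

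Finally, having shown $\rr_* \in L_F$, I invoke Corollary \ref{heightconf} (together with the remark immediately following its proof, which extends the statement from integer to rational and then to real directions parallel to $F$) to conclude that $h_{\rr_*}(\zz_n)$ converges to a finite value determined by $p$. Thus the sequence $h_{\rr_*}(\zz_n)$ has a finite limit point, which is precisely the definition of the CPAI being heighted. The only real subtlety in the argument is the initial passage to a subsequence and the identification of $p$ with an interior point of a unique face at infinity; this is handled by Lemma \ref{faces}, so there is no hard step beyond what has already been proved.
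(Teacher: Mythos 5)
Your argument is correct and follows essentially the same route as the paper's own proof: extract a convergent subsequence in the compact space $X_A$, locate the limit $p$ in $X^0(F)$ for some face $F$, apply the non-vanishing hypothesis together with Theorem \ref{genericresult} to conclude the limiting log-gradient direction lies in $L_F$, and then invoke Corollary \ref{heightconf} for heightedness. The extra details you supply (the passage to a subsequence, the identification of $\rr_n$ with the log-gradient via smoothness, and the extension of Corollary \ref{heightconf} to real directions) are all points the paper leaves implicit, and they are handled correctly.
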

\begin{proof}
	Let $\sequ$ be a sequence witnessing a critical point at infinity, meaning that $\Phi(\seq)$ converges to some point $p \in X_A$, and $\grad_{\log} H$ converges projectively to some direction $R \in \mathbb{CP}^{d-1}$. Then $p$ lies in $X^0(F)$ for some face $F$, so by Theorem \ref{genericresult}, $R$ is parallel to $F$. Therefore, $p$ is a CPAI in only one projective direction, and it is parallel to $F$, so by Corollary \ref{heightconf} of Lemma \ref{monomials}, the height function in this direction converges to a finite number, so $p$ is a heighted CPAI.
\end{proof}

\section{Examples and counterexamples}

\subsection{Paraboloid example: $H = 1 - 2x + x^2 + 1 - 2y + y^2 - z$}


In this example, $\NP$ is the convex hull of (0,0,0), (2,0,0), (0,2,0), and (0,0,1), so $X_A$ is actually smooth everywhere except the point corresponding to the apex (0,0,1) in $\NP$. The variety given by $z = (x-1)^2 + (y-1)^2$ is a smooth paraboloid when viewed in affine space, but there is a point on the compactified variety such that the direction of the log-gradient vector cannot be defined uniquely. The Newton polytope is a tetrahedron whose base lies in the $xy$-plane, and the gradient of $H$ at (1,1,0) is orthogonal to the $xy$-plane. For a point $(x,y,z)$ on $V^*$, the log-gradient direction is given by

$$[-2x + 2x^2 : -2y + 2y^2 : -z] = [2x(x-1) : 2y(y-1) : -(x-1)^2 -(y-1)^2],$$

where the $z$-coordinate vanishes to higher order at (1,1,0) than the $x$- and $y$-coordinates (so that, at least in this example, any limiting log-gradient direction of a sequence of points approaching (1,1,0) will be parallel to the $xy$-plane). If we choose a sequence of points on $V^*$ with $x = 1$ and $y$ approaching 1, we get that the log-gradient direction is

$$[0 : 2y(y-1) : -(y-1)^2] = [0 : 2y : -(y-1)],$$

which approaches $[0 : 2 : 0]$. Similarly, if we choose a sequence of points with $y = 1$ and $x$ approaching 1, the log-gradient direction approaches $[2 : 0 : 0]$ (and any other projective direction parallel to the $xy$-plane can be achieved by choosing sequences approaching (1,1,0) with $\frac{x-1}{y-1}$ held constant). Furthermore, in either case, the points have images in $X_A$ that converge to the same point $p$ in the compactified variety, the point whose coordinates in $\mathbb{CP}^{s-1}$ are 1 in all positions corresponding to monomials in the $xy$-plane (that is, not containing the variable $z$), and 0 in the rest. Therefore, there is no way to extend the direction of the log-gradient continuously to this point $p \in X_A$.

\subsection{A heighted CPAI in an unexpected direction}
\label{unexpected}

Let $H = z - y - (x-1)^2$, and let $\mathcal{F}$ be the edge (one-dimensional face) determined by (0,0,0) and (1,0,0). Note that as in the previous two examples, we can choose $\mathfrak{v} = \mathbf{0}$. We can easily see that any sequence $\sequ$ in $(\mathbb{C}^*)^d$ that converges to (1,0,0) in $\mathbb{C}^d$ will have images $\Phi(\seq)$ in $X_A$ that converge to the point $p$ whose coordinates are $1$ for $\mathbf{m}$ in $F$ and $0$ otherwise. The curve defined by $(1 + t, t, t + t^2)$ is contained in the variety $\sing = V(H)$, and $(1+t, t, t + t^2)$ converges to (1,0,0) as $t \rightarrow 0$. However, $\nabla_{\log} H$ has direction

$$[-2x(x-1) :-y : z] = [-2(1+t)t : -t : t + t^2] = [-2(1+t) : -1 : 1 + t],$$

which converges to [-2 : -1 : 1]. This direction is not parallel to any face of the Newton polytope of $H$.

Furthermore, the exponentiated height in this direction is

$$ \frac{z}{x^2 y} = \frac{t + t^2}{(1 + t)^2 t} = \frac{1}{1 + t},$$

which converges to 1 as $t \rightarrow 0$. Therefore, this is a heighted CPAI in a direction that is not parallel to any face of the Newton polytope! Despite this, and despite the fact that the height function in this direction does not extend continuously to $\mathcal{F}$, the height function does approach finite values along certain curves in $V^*$.  As a matter of fact, in this example we can use the fact that $z = y + (x-1)^2$ to substitute different functions of $t$ for $x$ and $y$ and find parameterizations for curves approaching (1,0,0) for which the log-gradient of $H$ approaches any direction at all of the form $[\alpha:-1:1]$. For example, if we take the path 
$$(1 + \gamma t, t, t + \gamma^2 t^2) \in \sing,$$ 
we get that $\nabla_{\log} H$ has direction $$[-2x(x-1) : -y : z] = [-2(1 + \gamma t) \gamma t : -t : t + \gamma^2 t^2] = [-2 \gamma (1 + \gamma t) : -1 : 1 + \gamma^2 t],$$ which converges to $[-2 \gamma : -1 : 1]$. On the other hand, we can instead take the path 
$$(1 + t, t^2, 2t^2) \in \sing,$$ 
which results in a log-gradient direction of 
$$[-2x(x-1) :-y : z] = [-2(1+t)t : -t^2 : 2t^2] = [-2(1+t) : -t : 2t],$$ 
which converges to the direction [-2:0:0] as $t \to 0$ and is indeed parallel to $F$. 

\section{Beyond the Generic Case}

When the generic hypotheses of Theorem \ref{genericresult} are satisfied, any point $p$ at infinity in the compactification of $\sing$ in $X_A$ has a unique direction to which the log-gradient can converge when a sequence $\sequ$ in $(\mathbb{C}^*)^d$ has images $\Phi(\seq)$ in $X_A$ converging to $p$, and that direction is always parallel to the face $F$ for which $p$ is in the corresponding face at infinity. We now turn our attention beyond the generic condition mentioned above and look for information about limiting log-gradient directions for a point $p$ in some cases in which the set of directions to which log-gradients can converge as $\Phi(\seq)$ approaches $p$ has positive dimension. But first, some background concerning monomial transformations is in order.

\subsection{Background: Monomial Transformations}


Let $A$ be a $d$-by-$d$ matrix of integers that is invertible over the rationals. For our purposes, the \textbf{monomial transformation} defined by $A$ is the map $\tau_A: (\mathbb{C}^*)^d \rightarrow (\mathbb{C}^*)^d$ given by $$\tau_A(z_1, ..., z_d) = \left(\prod_{j=1}^d (z_j^{a_{1j}}), ..., \prod_{j=1}^d (z_j^{a_{dj}})\right).$$ Each coordinate is a Laurent monomial and may contain negative exponents. Notice that when $\tau_A$ is defined in this way, we have that $\tau_A(\mathbf{z}) = e^{A\log(\mathbf{z})}$ (which is single-valued because $A$ is an integer matrix, even though $\log(\mathbf{z})$ is multivalued), where $e^{\mathbf{z}}$ and $\log(\mathbf{z})$ refer to componentwise exponentiation and natural logarithm respectively (and where we consider $\mathbf{z}$ as a column vector). Therefore, monomial transformations correspond to integer linear transformations in log space (the space where the coordinates are the natural logarithms of the original coordinates), with the caution that $\tau_A$ is injective only if $A$ is unimodular (has determinant $\pm 1$). However, $\tau_A$ is surjective if $A$ is invertible, even if it is not unimodular: one point in the preimage of $\mathbf{z}$ can be found as $e^{A^{-1} \operatorname{Log} (\mathbf{z})}$, where $\operatorname{Log}$ takes the principal value of the natural logarithm of each component


A monomial transformation $\tau_A$ naturally induces a map on Laurent polynomials $H$ in $\mathbb{C}[z_1, z_1^{-1}, ..., z_d, z_d^{-1}]$, given by $\tau_A^*(H) = H \circ \tau_A$. For example, if $H(\mathbf{z}) = z_i$ simply takes the $i^{\mathrm{th}}$ component, then $\tau_A^*(H) = \prod_{j=1}^d z_j^{a_{ij}}$, and therefore if $H(\mathbf{z}) = \mathbf{z^r}$ is any Laurent monomial then $$\tau_A^*(H) = \prod_{j=1}^d z_j^{\sum_{i=1}^d r_i a_{ij}} = \mathbf{z}^{A^T \mathbf{r}}.$$ We see that performing a monomial transformation will perform a linear transformation on the exponent vector of each term of $H$, and therefore on the Newton polytope $\NP$, so the following basic lemma about linear tranformations on polytopes will be useful.

\begin{lem}
	Let $\NP$ be the convex hull of a finite set $\mathcal{A} \subseteq \mathbb{R}^d$, and let $T$ be an affine transformation (a linear transformation $L$ followed by a translation) on $\mathbb{R}^d$. Then the convex hull of $T(\mathcal{A})$ is $T(\NP)$. Furthermore, if $T$ is invertible, then for every face $\mathcal{F}$ of $\NP$, we have that $T(\mathcal{F})$ is a face of $T(\NP)$.
\end{lem}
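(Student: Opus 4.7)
The plan is to dispatch the two claims separately, with both reducing to the basic fact that affine maps commute with convex combinations.

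For the first claim, I would argue two inclusions. Write $T(x) = L(x) + t$ where $L$ is linear and $t \in \mathbb{R}^d$. The key identity is that for any affine combination with $\sum_i \lambda_i = 1$,
\begin{equation*}
T\Bigl(\sum_i \lambda_i x_i\Bigr) = L\Bigl(\sum_i \lambda_i x_i\Bigr) + t = \sum_i \lambda_i L(x_i) + \Bigl(\sum_i \lambda_i\Bigr) t = \sum_i \lambda_i T(x_i).
\end{equation*}
Applied with $\lambda_i \geq 0$ and $x_i \in \mathcal{A}$, this gives $T(\NP) \subseteq \mathrm{conv}(T(\mathcal{A}))$. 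The reverse inclusion is the same identity read backwards: any convex combination $\sum_i \lambda_i T(a_i)$ equals $T(\sum_i \lambda_i a_i)$, and $\sum_i \lambda_i a_i \in \NP$.

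For the second claim, I would use the supporting-hyperplane characterization of faces: a subset $\mathcal{F} \subseteq \NP$ is a face iff there is a linear functional $\ell$ and constant $c$ with $\ell(x) \leq c$ for all $x \in \NP$ and $\mathcal{F} = \{x \in \NP : \ell(x) = c\}$. Since $T$ is invertible and affine, so is $T^{-1}$, and the map $\ell' := \ell \circ T^{-1}$ is an affine functional on $\mathbb{R}^d$. By part 1, every point of $T(\NP)$ has the form $T(x)$ for some $x \in \NP$, and $\ell'(T(x)) = \ell(x) \leq c$, so $\ell' \leq c$ on $T(\NP)$. Equality $\ell'(T(x)) = c$ holds iff $\ell(x) = c$, i.e.\ iff $x \in \mathcal{F}$, iff $T(x) \in T(\mathcal{F})$. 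So $T(\mathcal{F})$ is exactly the set of maximizers of $\ell'$ on $T(\NP)$. Up to replacing the affine functional $\ell'$ by its linear part (by absorbing the constant into $c$), this is a face of $T(\NP)$.

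I do not anticipate a genuine obstacle here; this is elementary convex geometry. The only nuance worth flagging is keeping track of the translation: because $T$ is affine rather than linear, one cannot literally push a linear functional forward, but conjugating by $T^{-1}$ automatically produces the correct affine functional for $T(\NP)$. Invertibility of $T$ (equivalently, of $L$) is used exactly once, to ensure that $T^{-1}$ exists as an affine map so that $\ell' = \ell \circ T^{-1}$ makes sense globally on $\mathbb{R}^d$.
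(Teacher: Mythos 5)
Your proof is correct and takes essentially the same route as the paper: the first claim via the fact that affine maps commute with convex combinations, and the second via supporting hyperplanes, with invertibility used to transport the hyperplane. Your functional formulation $\ell' = \ell \circ T^{-1}$ is just the dual phrasing of the paper's statement that $T(S)$ is a supporting hyperplane whose intersection with $T(\NP)$ is exactly $T(\mathcal{F})$.
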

\begin{proof}
	The first statement follows directly from linearity of $L$ and the definition of convex hull. For the second, if $S$ is a supporting hyperplane for $\mathcal{F}$ as a face of $\NP$, then injectivity implies that the preimage of $T(S) \cap T(\NP)$ is no larger than $S \cap \NP = \mathcal{F}$, so $T(S)$ is a supporting hyperplane for $T(\NP)$.
\end{proof}

It is a well-known fact that if $g(\mathbf{z}) = f(A\mathbf{z})$, then $\nabla g(\mathbf{z}) = A^T ((\nabla f)(A\mathbf{z}))$. It is not too difficult to come up with a similar equation for log-gradients and monomial transformations.

\begin{lem}
	\label{monomchain}
	Let $\tau_A$ be the monomial transformation defined by the $d$-by-$d$ matrix $A$, let $f: \mathbb{C}^d \rightarrow \mathbb{C}$ be a function that is continuously differentiable in a neighborhood of $\tau_A(\mathbf{z})$, and let $g(\mathbf{z}) = f(\tau_A(\mathbf{z}))$. Then $$\grad_{\log} g (\mathbf{z}) = A^T \grad_{\log} f(\tau_A(\mathbf{z})),$$ where $\grad_{\log} f(\tau_A(\mathbf{z}))$ means the log-gradient of the outer function $f$ evaluated at the inner function $\tau_A(\mathbf{z})$.
\end{lem}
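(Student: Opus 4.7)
The plan is a direct chain-rule calculation, most cleanly phrased by passing to logarithmic coordinates. Write $w_i = (\tau_A(\mathbf{z}))_i = \prod_{j=1}^d z_j^{a_{ij}}$, so that $g(\mathbf{z}) = f(w_1(\mathbf{z}), \ldots, w_d(\mathbf{z}))$. The observation to exploit is that if we set $u_j = \log z_j$ and $v_i = \log w_i$, then $v_i = \sum_j a_{ij} u_j$, i.e.\ $\mathbf{v} = A \mathbf{u}$. In these coordinates the log-gradient is literally the ordinary gradient: $(\nabla_{\log} g)_k = \partial g/\partial u_k$ and $(\nabla_{\log} f)_i = \partial f/\partial v_i$. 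The identity to prove is then the standard linear chain rule $\nabla_{\mathbf{u}} g = A^T \nabla_{\mathbf{v}} f$ for the composition $g = f \circ \exp \circ (A \cdot)$ on log-space, which is immediate once one checks that the exponential change of variable turns each $\nabla$ into $\nabla_{\log}$.

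If a reader prefers a coordinate computation without changing variables, the plan is equally short: differentiate $w_i = \prod_j z_j^{a_{ij}}$ directly to get $\partial w_i/\partial z_k = a_{ik} w_i/z_k$, then apply the ordinary chain rule
\[
\frac{\partial g}{\partial z_k}(\mathbf{z}) = \sum_{i=1}^d \frac{\partial f}{\partial w_i}(\tau_A(\mathbf{z})) \cdot \frac{\partial w_i}{\partial z_k}(\mathbf{z}) = \sum_{i=1}^d a_{ik} \frac{w_i}{z_k} \frac{\partial f}{\partial w_i}(\tau_A(\mathbf{z})).
\]
Multiplying through by $z_k$ gives
\[
\bigl(\nabla_{\log} g(\mathbf{z})\bigr)_k = z_k \frac{\partial g}{\partial z_k}(\mathbf{z}) = \sum_{i=1}^d a_{ik}\, w_i \frac{\partial f}{\partial w_i}(\tau_A(\mathbf{z})) = \sum_{i=1}^d (A^T)_{ki}\, \bigl(\nabla_{\log} f(\tau_A(\mathbf{z}))\bigr)_i,
\]
which is exactly the $k$-th component of $A^T \nabla_{\log} f(\tau_A(\mathbf{z}))$.

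There is essentially no obstacle: continuous differentiability of $f$ near $\tau_A(\mathbf{z})$ is exactly what is needed to invoke the chain rule, and the factors of $z_k$ and $w_i$ produced by logarithmic differentiation combine with $\partial w_i/\partial z_k$ to swallow the $1/z_k$ without any analytic subtlety. The only bookkeeping point worth flagging is that the transpose in $A^T$ arises because $a_{ik}$ appears multiplying a term indexed by $i$ in the sum that produces the $k$-th output component, i.e.\ the roles of row and column are swapped compared to the matrix that defines $\tau_A$.
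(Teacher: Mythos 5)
Your second, coordinate-level computation is exactly the paper's proof: both expand $z_k\,\partial g/\partial z_k$ via the multivariate chain rule, use $z_k\,\partial w_i/\partial z_k = a_{ik} w_i$, and read off the $k$-th component of $A^T \nabla_{\log} f(\tau_A(\mathbf{z}))$. The proposal is correct and takes essentially the same approach.
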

\begin{proof}
	If $\mathbf{y} = \tau_A(\mathbf{z})$ and $g(\mathbf{z}) = f(\tau_A(\mathbf{z})) = f(\mathbf{y})$, then we have by the multivariate chain rule that
	
	$$ \nabla_{\log} g = \begin{bmatrix} z_1 \frac{\partial g}{\partial z_1} \\ \vdots \\ z_d \frac{\partial g}{\partial z_d} \end{bmatrix} = \begin{bmatrix} z_1 (\frac{\partial f}{\partial y_1} \cdot \frac{\partial}{\partial z_1} \prod_{j=1}^d (z_j^{a_{1j}}) + ... + \frac{\partial f}{\partial y_d} \cdot \frac{\partial}{\partial z_1} \prod_{j=1}^d (z_j^{a_{dj}})) \\ \vdots \\ z_d (\frac{\partial f}{\partial y_1} \cdot \frac{\partial}{\partial z_d} \prod_{j=1}^d (z_j^{a_{1j}}) + ... + \frac{\partial f}{\partial y_d} \cdot \frac{\partial}{\partial z_d} \prod_{j=1}^d (z_j^{a_{dj}})) \end{bmatrix} $$ 
	
	$$ = \begin{bmatrix} a_{11} \frac{\partial f}{\partial y_1} \prod_{j=1}^d (z_j^{a_{1j}}) + ... + a_{d1} \frac{\partial f}{\partial y_d} \prod_{j=1}^d (z_j^{a_{dj}}) \\ \vdots \\ a_{1d} \frac{\partial f}{\partial y_1} \prod_{j=1}^d (z_j^{a_{1j}}) + ... + a_{dd} \frac{\partial f}{\partial y_d} \prod_{j=1}^d (z_j^{a_{dj}}) \end{bmatrix} = A^T \begin{bmatrix} \frac{\partial f}{\partial y_1} \prod_{j=1}^d (z_j^{a_{1j}}) \\ \vdots \\ \frac{\partial f}{\partial y_d} \prod_{j=1}^d (z_j^{a_{dj}}) \end{bmatrix},$$
	
	where each $\frac{\partial f}{\partial y_i}$ means the value of that partial derivative at $\tau_A(\mathbf{z})$. The value of the monomial $z_i$ at $\tau_A(\mathbf{z})$ is $\prod_{j=1}^d (z_j^{a_{ij}})$, so the result follows.
\end{proof}

\begin{remark}
	For the case where $F$ is a Laurent polynomial, there is a shorter proof. If $f(\mathbf{z}) = \mathbf{z^r}$ is a Laurent monomial, then $\grad_{\log} f(\mathbf{z}) = \mathbf{r z^r} = \mathbf{r} f(\mathbf{z})$, and $g(\mathbf{z}) = f(\tau_A(\mathbf{z})) = \mathbf{z}^{A^T \mathbf{r}}$, so $$\grad_{\log} g(\mathbf{z}) = A^T \mathbf{r} \mathbf{z}^{A^T \mathbf{r}} = A^T \mathbf{r} f(\tau_A(\mathbf{z})) = A^T \grad_{\log} f(\tau_A(z)),$$ and the result for all Laurent polynomials then follows by linearity.
\end{remark}

\subsection{Transforming the Newton Polytope for Analysis Near $X^0(F)$}
\label{transfpoly}

The goal of this section is to transform coordinates so that a face at infinity becomes an intersection of coordinate hyperplanes $\{ y_1 = \cdots = y_k = 0 \}$. Subject to certain conditions, it will then be possible to compute the space of possible limiting log-gradient directions, and therefore the set of all possible directions for CPAI. 

Recall that when the rescaled log-gradient approaches a nonzero vector, a point $p \in X_A$ can be a CPAI for a single direction only. However, when it fails (typically at only finitely many points), it will be possible for a single point to be a CPAI for a codimension-1 set of directions. When $F$ is not a facet, not all of these will be parallel to $F$.

Let $\mathcal{F}$ be a facet (codimension-1 face) of the Newton polytope $\NP$. If the $d$ lattice points $\mathbf{v}, \mathbf{v}_1, ..., \mathbf{v}_{d-1} \in \mathbb{Z}^d$ are affinely independent, they define a hyperplane normal to the integer vector given by the formal $d$-by-$d$ determinant $\left| \begin{matrix} \mathbf{e}_1 & \cdots & \mathbf{e}_d \\ & \mathbf{v}_1 - \mathbf{v} \\ & \vdots & \\ & \mathbf{v}_{d-1} - \mathbf{v} \end{matrix} \right|$ of the ``matrix" whose first row has as entries the standard basis vectors, and whose second through last rows are the vectors $\vv_j - \vv$ (similar to taking a cross product). 

Define the \textbf{inward-pointing normal} to $\mathcal{F}$ to be the minimum-modulus integer vector $\mathbf{n}_{\mathcal{F}}$ that is normal to the facet $\mathcal{F}$ and is oriented such that $\mathbf{n}_{\mathcal{F}} \cdot \mathbf{m} \geq 0$ for all $\mathbf{m} \in \tilde{\NP} = \NP - \mathbf{v}$.  

\begin{pr}
	When $\mathcal{F}$ is a facet, it is possible to apply an affine transformation to $\NP$ to move the hyperplane defined by $\mathcal{F}$ to one of the coordinate hyperplanes, in such a way that the rest of the polytope ends up being above (as opposed to below) the coordinate hyperplane. If $A^T$ is the matrix of this transformation, then $\overline{H} = \tau_A^*(H/\mathbf{z}^{\mathfrak{v}})$ will have a constant term, but no negative powers of $z_d$.
\end{pr}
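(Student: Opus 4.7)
The plan is to construct the affine transformation explicitly from the inward-pointing normal $\mathbf{n}_{\mathcal{F}}$. Because $\mathbf{n}_{\mathcal{F}}$ is by definition the minimum-modulus integer normal to $\mathcal{F}$, its coordinates have greatest common divisor one, i.e., it is primitive. A standard lemma on lattice bases (equivalently, Smith normal form) then lets me complete $\mathbf{n}_{\mathcal{F}}$ to an integer basis of $\mathbb{Z}^d$; I take $A$ to be the unimodular integer matrix whose last column equals $\mathbf{n}_{\mathcal{F}}$ and whose first $d-1$ columns are any such completion. Then $A$ is integer and invertible over $\mathbb{Q}$, so $\tau_A$ is defined, and the induced affine map $\mathbf{m} \mapsto A^T(\mathbf{m}-\mathfrak{v})$ on exponent vectors is my candidate transformation.

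Next I would verify the geometric property. The translation by $-\mathfrak{v}$ sends $\mathcal{F}$ into the linear hyperplane $L_{\mathcal{F}} = \{\mathbf{x} : \mathbf{n}_{\mathcal{F}} \cdot \mathbf{x} = 0\}$, and sends the rest of $\NP$ strictly into the open half-space $\{\mathbf{x} : \mathbf{n}_{\mathcal{F}} \cdot \mathbf{x} > 0\}$ by the inward-pointing definition of $\mathbf{n}_{\mathcal{F}}$. Because the last row of $A^T$ is $\mathbf{n}_{\mathcal{F}}^T$ by construction, the last coordinate of $A^T \mathbf{x}$ equals $\mathbf{n}_{\mathcal{F}} \cdot \mathbf{x}$. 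Thus the image of $\mathcal{F}$ lies in the coordinate hyperplane $\{y_d = 0\}$ and the image of $\NP \setminus \mathcal{F}$ lies strictly in $\{y_d > 0\}$, which is the desired geometric claim.

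Finally I would read off the two properties of $\overline{H} = \tau_A^*(H/\mathbf{z}^{\mathfrak{v}})$. Division by $\mathbf{z}^{\mathfrak{v}}$ shifts each exponent by $-\mathfrak{v}$, and the formula $\tau_A^*(\mathbf{z}^{\mathbf{r}}) = \mathbf{z}^{A^T \mathbf{r}}$ established earlier means that each monomial $c_{\mathbf{m}} \mathbf{z}^{\mathbf{m}}$ of $H$ contributes the term $c_{\mathbf{m}} \mathbf{z}^{A^T(\mathbf{m}-\mathfrak{v})}$ to $\overline{H}$. The last coordinate of $A^T(\mathbf{m}-\mathfrak{v})$ is nonnegative for every $\mathbf{m} \in \NP \cap \mathbb{Z}^d$ by the previous paragraph, so no negative powers of $z_d$ appear in $\overline{H}$. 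The term with $\mathbf{m} = \mathfrak{v}$ has exponent $\mathbf{0}$ and coefficient $c_{\mathfrak{v}} \neq 0$ (since $\mathfrak{v}$ is a vertex of $\NP$), producing the constant term, and injectivity of $A^T$ guarantees $\mathbf{m} = \mathfrak{v}$ is the only contributor.

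The argument is essentially bookkeeping; the only substantive ingredient is the fact that a primitive integer vector extends to an integer basis, which is standard. Choosing $A$ unimodular (rather than merely integer-invertible) is not demanded by the statement but is natural, since it makes $\tau_A$ a biholomorphism of $(\mathbb{C}^*)^d$ and will likely be used in the subsequent analysis near $X^0(\mathcal{F})$.
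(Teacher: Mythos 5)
Your proposal is correct and follows essentially the same route as the paper: both take $A^T$ to have the inward-pointing normal $\mathbf{n}_{\mathcal{F}}$ as its last row, so that the last coordinate of each transformed exponent is $\mathbf{n}_{\mathcal{F}} \cdot (\mathbf{m} - \mathfrak{v}) \geq 0$, with the vertex $\mathfrak{v}$ yielding the constant term. Your extra care in choosing a unimodular completion (rather than merely an invertible integer matrix) is a refinement the paper only invokes afterward, when it assumes the columns of $A$ form a lattice basis.
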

\begin{proof}
	We simply subtract $\mathfrak{v}$ and then left-multiply by any invertible (not necessarily orthogonal!) matrix $A^T$ whose last row is the inward-pointing normal. Under any such transformation, the lattice point $\mathbf{m}$ will be sent to a vector whose last entry is $\mathbf{n}_{\mathcal{F}} \cdot (\mathbf{m} - \mathfrak{v})$, which is nonnegative if $\mathbf{m} \in \NP$ and zero if $\mathbf{m} \in \mathcal{F}$. Therefore, $A^T \tilde{\mathcal{F}}$ will be contained in the coordinate hyperplane $z_d = 0$, and the transformed Laurent polynomial $\tau_A^*(H/\mathbf{z}^{\mathfrak{v}})$ will have no negative powers of $z_d$, but it may of course have negative powers of the other variables. The term that used to have exponent vector $\mathfrak{v}$ will become a constant term.
\end{proof}

If the sequence converges to the face at infinity corresponding to $F$ before the monomial transformation, then it (or any subsequence of it) can only converge to the face at infinity corresponding to $\{z_d = 0\}$ afterwards; assuming we chose the columns of $A$ to be a lattice basis, which is possible in the case of a facet, the same set of coordinates in $\mathbb{CP}^{s-1}$ is approaching zero.

Now suppose that $\mathcal{F}$ is a codimension-$k$ face of $\NP$ with $k > 1$, and suppose that $\mathcal{F}_1, ..., \mathcal{F}_k$ are $k$ facets whose intersection is $\mathcal{F}$ with linearly independent inward-pointing normals. In the remainder of this section, we will apply similar logic to construct an invertible monomial transformation matrix (which will be called $N$ from this point forward to avoid confusion with the set $A$ of lattice points defining $X_A$) with the last $k$ columns being the inward-pointing normals to these $k$ facets, and $N^T \tilde{\NP}$ will be contained in $\mathbb{R}^{d-k} \times \mathbb{R}_{\geq 0}^k$ (the intersection of half-spaces where the last $k$ variables are nonnegative).

\begin{definition}
	\label{montransf}
	Suppose that $\mathcal{F}$ is a codimension-$k$ face of the Newton polytope $\NP$ of a Laurent polynomial $H$, let $\mathfrak{v}$ be a vertex of $\FF$, and let $\mathbf{v} = \kappa \mathfrak{v}$ be the corresponding vertex of the face $F = \kappa \FF$ of $Q$. Suppose that $\mathcal{F}_1, ..., \mathcal{F}_k$ are $k$ facets whose intersection is $\mathcal{F}$ with linearly independent inward-pointing normals, and let $F_1, ..., F_k$ be the corresponding facets of $Q$. Let $N$ be any integer matrix, invertible over the rationals, whose last $k$ columns are the inward-pointing normals to $\mathcal{F}_1, ..., \mathcal{F}_k$. We define the following notation:
	\begin{enumerate}
		\item $\overline{H} = \tau_N^*(\mathbf{z}^{-\mathfrak{v}} H)$,
		\item $\overline{\NP} = N^T(\NP - \mathfrak{v})$, the Newton polytope of $\overline{H}$,
		\item $\overline{Q} = \kappa \overline{\NP}$, an integer multiple of $\overline{\NP}$ that is normal (any integer $\kappa \geq \max\{1, d-1\}$ suffices, so we can assume WLOG that the same $\kappa$ is used to normalize both $Q$ and $\overline{Q}$),
		\item $\overline{A} = \overline{Q} \cap \mathbb{Z}^d = \{\overline{\mm}_1, ..., \overline{\mm}_{\overline{s}}\}$ (in general, we can have $\overline{s} \geq s$), and
		\item $\overline{\Phi}: (\mathbb{C}^*)^d \rightarrow \mathbb{CP}^{\overline{s}-1}$ is the map given by $\overline{\Phi}(\mathbf{w}) = [\mathbf{w}^{\overline{\mm}_1}, ..., \mathbf{w}^{\overline{\mm}_{\overline{s}}}]$ (and $X_{\overline{A}}$ is the closure of the image of the map $\overline{\Phi}$).
	\end{enumerate}
\end{definition}

\begin{remark}
	We cannot say that $\overline{\Phi} = \Phi \circ \tau_N$: their target spaces can be in different projective spaces, as when $\overline{Q}$ contains additional lattice points that are not images of lattice points of $\tilde{Q}$ under the linear transformation given by $N^T$. Also, $\overline{\Phi}$ is injective, while $\Phi \circ \tau_N$ may not be.
\end{remark}

The following lemma about monomial transformations helps us to reduce to the case where $p$ is located in a face lying in the intersection of the last $k$ coordinate hyperplanes.

\begin{lem}
	\label{seqtransf}
	Under the assumptions and notation in Definition \ref{montransf}, suppose also that $\sequ \subseteq \sing^*$ is a sequence such that $\Phi(\seq)$ converges to some point $p$ in the face at infinity in $X_A$ corresponding to $F$, and that the projective direction of $\grad_{\log} H$ evaluated at $\seq$ converges in $\mathbb{CP}^{d-1}$ to some direction $R \in \mathbb{CP}^{d-1}$ (but the magnitude of $\grad_{\log} H$ might approach zero or infinity). Then:
	\begin{enumerate}
		\item $\overline{H}$ has a constant term, but no negative powers of any of the last $k$ variables, and
		\item There exists a sequence $\sequw \subseteq V(\overline{H}) \cap (\mathbb{C}^*)^d$ such that: 
		\begin{enumerate}
			\item $\left\{\tau_N(\seqw)\right\}_{n=1}^{\infty}$ is a subsequence of $\sequ$,
			\item The images of $\seqw$ under the map $\overline{\Phi}$ converge in $X_{\overline{A}}$ to a point $\overline{p}$ on the face at infinity corresponding to the face $\overline{F} = N^T\tilde{F}$ of codimension $k$ that is contained in the intersection of the last $k$ coordinate hyperplanes, and
			\item $\grad_{\log} \overline{H}$ evaluated at $\seqw$ converges projectively to $N^T R$, which is parallel to the intersection of the last $k$ coordinate hyperplanes if and only if $R$ is parallel to $F$.
		\end{enumerate}
	\end{enumerate}
\end{lem}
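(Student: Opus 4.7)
Part~1 is immediate from the construction of $N$: each monomial $c_{\mathbf{m}}\mathbf{z}^{\mathbf{m}}$ of $H$ yields a term $c_{\mathbf{m}}\mathbf{w}^{N^T(\mathbf{m}-\mathfrak{v})}$ in $\overline{H}=\tau_N^*(\mathbf{z}^{-\mathfrak{v}}H)$, and because $\mathfrak{v}\in\FF\subseteq\FF_j$ and $\mathbf{n}_{\FF_j}$ is the inward-pointing normal to $\FF_j$, the last $k$ entries of $N^T(\mathbf{m}-\mathfrak{v})$, namely $\mathbf{n}_{\FF_j}\cdot(\mathbf{m}-\mathfrak{v})$ for $j=1,\ldots,k$, are nonnegative for every $\mathbf{m}\in\NP$ and all vanish when $\mathbf{m}=\mathfrak{v}$; this yields the constant term and rules out negative powers of the last $k$ variables. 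Part~2 then proceeds by constructing a sequence $\{\mathbf{w}_n\}$, identifying its limit in $X_{\overline{A}}$, and tracking the log-gradient direction.

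To build $\{\mathbf{w}_n\}$, I would pick any $\mathbf{w}_n\in\tau_N^{-1}(\mathbf{z}_n)\cap(\mathbb{C}^*)^d$ for each $n$; such preimages exist because $N$ is invertible over $\mathbb{Q}$, making $\tau_N$ surjective on the complex torus. Then $\overline{H}(\mathbf{w}_n)=(\mathbf{z}^{-\mathfrak{v}}H)\bigl(\tau_N(\mathbf{w}_n)\bigr)=\mathbf{z}_n^{-\mathfrak{v}}H(\mathbf{z}_n)=0$, so $\mathbf{w}_n\in V(\overline{H})\cap(\mathbb{C}^*)^d$, and (a) follows once the subsequence extracted below is taken. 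For (b), compactness of $X_{\overline{A}}$ gives a subsequence along which $\overline{\Phi}(\mathbf{w}_n)$ converges to some $\overline{p}$, and by Lemma \ref{faces} this limit lies in $X^0(G)$ for a unique face $G$ of $\overline{Q}$. I identify $G$ with $\overline{F}=N^T\tilde{F}$ by computing each coordinate $\mathbf{w}_n^{\overline{\mathbf{m}}}$ for $\overline{\mathbf{m}}\in\overline{A}$: since $\overline{Q}=N^T\tilde{Q}$ as sets, each such $\overline{\mathbf{m}}$ factors as $N^T\mathbf{m}'$ with $\mathbf{m}'=(N^T)^{-1}\overline{\mathbf{m}}\in\tilde{Q}\cap\mathbb{Q}^d$, and a computation in logarithms gives $|\mathbf{w}_n^{\overline{\mathbf{m}}}|=|\mathbf{z}_n^{\mathbf{m}'}|$ independently of the branch of $\tau_N^{-1}$ chosen. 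Clearing denominators by a positive integer $c$ with $c\mathbf{m}'\in\mathbb{Z}^d$ places $c\mathbf{m}'$ in $\sigma_F\cap\mathbb{Z}^d$, so part~2 of Theorem \ref{monomials} yields $|\mathbf{z}_n^{c\mathbf{m}'}|\to$ nonzero iff $c\mathbf{m}'\in L_F$, equivalently (using $\tilde{F}=L_F\cap\tilde{Q}$) iff $\mathbf{m}'\in\tilde{F}$, iff $\overline{\mathbf{m}}\in\overline{F}$. A further subsequence aligns the unit-modulus arguments of the finitely many relevant coordinates, placing $\overline{p}\in X^0(\overline{F})$.

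For (c), Lemma \ref{monomchain} gives $\grad_{\log}\overline{H}(\mathbf{w}_n)=N^T\grad_{\log}(\mathbf{z}^{-\mathfrak{v}}H)(\mathbf{z}_n)$, and because $\mathbf{z}_n\in V(H)$, Lemma \ref{monomgradlog} rewrites the right side as $\mathbf{z}_n^{-\mathfrak{v}}\,N^T\grad_{\log}H(\mathbf{z}_n)$. The nonzero scalar $\mathbf{z}_n^{-\mathfrak{v}}$ falls out projectively, so the projective limit of $\grad_{\log}\overline{H}(\mathbf{w}_n)$ is $N^TR$. The last $k$ entries of $N^TR$ vanish precisely when $\mathbf{n}_{\FF_j}\cdot R=0$ for every $j$, i.e., when $R\in\bigcap_{j=1}^{k}\mathbf{n}_{\FF_j}^{\perp}=L_F$, which is exactly the condition that $R$ is parallel to $F$.

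The most delicate step is (b): when $N$ is not unimodular, $\overline{A}$ strictly contains $N^T(\tilde{Q}\cap\mathbb{Z}^d)$, so the coordinates of $\overline{\Phi}(\mathbf{w}_n)$ cannot all be written as integer-exponent monomials in $\mathbf{z}_n$. The denominator-clearing reduction above is what bridges this gap by recasting every convergence question as one about integer exponents in $c\tilde{Q}$, where Theorem \ref{monomials} applies directly.
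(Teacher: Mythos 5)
Your proof is correct and follows essentially the same route as the paper's: surjectivity of $\tau_N$ plus compactness of $X_{\overline{A}}$ for 2(a), Theorem \ref{monomials} to identify the limiting face for 2(b), and Lemmas \ref{monomgradlog} and \ref{monomchain} for 2(c). The only variation is in 2(b), where the paper sidesteps the non-unimodular issue you flag by working only with the vertices of $\overline{Q}$ (which are exactly the images $N^T(\mm-\vv)$ of vertices of $Q$) and concluding by a two-sided convexity argument, whereas you handle every lattice point of $\overline{A}$ directly via rational exponents and denominator-clearing --- both are valid.
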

\begin{proof}
	For item 1, note that the exponent vectors of the monomials in $\overline{H}$ are exactly those of the form $N^T(\mathbf{m} - \mathfrak{v})$, where $\mathbf{m}$ is a monomial vector in $H$. We know that $H$ has a term with exponent vector $\mathfrak{v}$ because $\mathfrak{v}$ is a vertex of the Newton polytope of $H$, so $\overline{H}$ has a term with monomial $N^T(\mathfrak{v} - \mathfrak{v}) = \mathbf{0}$ (a constant term). Furthermore, for each $\FF_j$, the inward-pointing normal $\mathbf{n}_{\FF_j}$ has the property that $\mathbf{n}_{\FF_j} \cdot \mathbf{\tilde{m}} \geq 0$ for all $\mathbf{\tilde{m}} \in \tilde{\NP}$, and the last $k$ coordinates of any exponent vector of $\overline{H}$ (which are $N^T(\mathbf{m} - \mathfrak{v})$ for $\mathbf{m}$ an exponent vector of a term in $H$, so in particular $\mathbf{m} \in \NP$) are all of the form $\mathbf{n}_{\FF_j} \cdot \mathbf{\tilde{m}}$ for some $\mathbf{\tilde{m}} \in \tilde{\NP}$ and therefore nonnegative.
	
	For item 2(a), first recall that $\tau_N$ is surjective, so for each $n$ we will let $\mathbf{u}_n$ be a point in the preimage of $\mathbf{z}_n$ under $\tau_N$, and by compactness of $X_{\overline{A}}$ as a closed subset of projective space, this sequence has a subsequence $\sequw$ (with $\tau_N(\mathbf{w}_n) = \mathbf{z}_{m_n}$ for some increasing sequence of positive integers $\left\{m_n\right\}_{n=1}^{\infty}$) such that $\overline{\Phi}(\seqw)$ converges in $X_{\overline{A}}$ to some point $\overline{p}$. Clearly, $\overline{H}(\mathbf{w}_n) = \mathbf{z}_{m_n}^{-\mathfrak{v}} H(\zz_{m_n}) = 0$ because $\zz_{m_n} \in \sing^*$.
	
	For 2(b), it is clear that $\overline{F}$ has codimension $k$, and it is clear that it is contained in the intersection of the last $k$ coordinate hyperplanes because each of the last $k$ coordinates of a point in $\overline{F} = N^T(F - \mathbf{v})$ is the dot product of a normal to a facet $F_j$ with a vector that is a difference of two points in $F \subseteq F_j$, and is therefore zero. We know by Lemma \ref{faces} that $\overline{p}$ is in the face at infinity in $X_{\overline{A}}$ corresponding to some face $\overline{G}$ of $\overline{Q}$; we need to show that $\overline{G} = \overline{F}$. Notice that vertices $\mm$ of $Q$ map in one-to-one correspondence to vertices $N^T(\mathbf{m} - \mathbf{v})$ of $\overline{Q}$ (the same cannot be said in general with ``vertices" replaced by ``lattice points"), with $\mm \in F$ if and only if $N^T(\mathbf{m} - \mathbf{v}) \in \overline{F}$. Notice also that if $\mathbf{m}$ is a lattice point of $Q$, then the component of $\overline{\Phi}(\mathbf{w}_n)$ corresponding to the lattice point $N^T(\mathbf{m} - \mathbf{v}) \in \overline{Q}$ is $\mathbf{w}_n^{N^T(\mathbf{m} - \mathbf{v})} = \mathbf{z}_{m_n}^{\mathbf{m} - \mathbf{v}}$, a monomial that converges to a finite value that is nonzero if $\mathbf{m} \in F$, and zero if $\mathbf{m} \in Q \backslash F$, by Lemma \ref{monomials} because $p \in X^0(F)$. (Recall that the origin is in $\overline{Q}$, so no further rescaling is necessary in $\overline{\Phi}$ because one of the coordinates of $\overline{\Phi}$ is identically 1, and all coordinates approach finite values.) Therefore, the face $\overline{G}$ contains $N^T(\mathbf{m} - \mathbf{v})$ for all $\mathbf{m} \in F$, and because it is convex, it contains their convex hull, which is $\overline{F}$. On the other hand, if $\overline{G}$ is not contained in $\overline{F}$, then $\overline{G}$ contains a vertex of $\overline{Q}$ that is not in $\overline{F}$, and this vertex is of the form $N^T(\mathbf{m} - \mathbf{v})$ for some vertex (in particular, some lattice point) $\mathbf{m}$ of $Q$ that is not in $F$, so that $\mathbf{w}_n^{N^T(\mathbf{m} - \mathbf{v})} = \mathbf{z}_{m_n}^{\mathbf{m} - \mathbf{v}}$ approaches zero (contradiction).
	
	For 2(c), for ease of notation we let $\tilde{H} = \zz^{-\mathfrak{v}} H$. We first note by Lemma \ref{monomgradlog} that $\grad_{\log} \tilde{H}(\zz_{m_n})$ is projectively equivalent to $\grad_{\log} H(\zz_{m_n})$ because $\zz_{m_n} \in \sing^*$. Because $\overline{H} = \tilde{H} \circ \tau_N$, we have by Lemma \ref{monomchain} that $\grad_{\log} \overline{H}(\mathbf{w}_n) = N^T \grad_{\log} \tilde{H} (\zz_{m_n})$. Multiplication by an invertible matrix $N^T$ is still well-defined as a continuous function from $\mathbb{CP}^{d-1}$ to itself (because $N^T(c\mathbf{r}) = c N^T \mathbf{r}$, and because $N^T$ is invertible we have that $N^T \mathbf{r} = 0$ implies $\mathbf{r} = 0$), so if $p$ is the standard projection map from $\mathbb{C}^* \backslash \{\mathbf{0}\}$ to $\mathbb{CP}^{d-1}$ (the assumption that $\grad_{\log} H (\zz_n)$ converges projectively to anything implies that its value is not the zero vector for sufficiently large $n$, even though its magnitude may converge to zero), then the equation $$p(\grad_{\log} \overline{H}(\mathbf{w}_n)) = N^T p(\grad_{\log} H (\zz_{m_n}))$$ holds in projective space for all sufficiently large $n$. By assumption, $p(\grad_{\log} H (\zz_{m_n}))$ converges in $\mathbb{CP}^{d-1}$ to $R$, so $p(\grad_{\log} \overline{H}(\mathbf{w}_n))$ converges in $\mathbb{CP}^{d-1}$ to $N^T R$ as desired (even though $\grad_{\log} \overline{H}(\mathbf{w}_n)$ may itself have converged to the zero vector in $\mathbb{C}^d$).
\end{proof}


\subsection{A Modified Simple Condition}
\label{secsimple}



The goal of the next few sections is to determine the possible directions in which a CPAI can occur for some meaningful examples when the generic condition in Theorem~\ref{genericity} fails somewhere in the closure of $\sing^*$ in $X_A$. 
The result ultimately achieved, Theorem~\ref{simplecaseresult}, holds when $p$ lies on a face $F$ such that $\sigma_F/L_F$ is simplicial, the variety $V(\overline{H})$ is smooth near $p$ after a monomial transformation, and the Jacobian of the log-gradient of $\overline{H}$ is nondegenerate. The conclusion is that a single point can be a critical point at infinity for a set of directions of codimension~1 but not a set of directions of full dimension. When $p$ is in a face $F$ of codimension~2 or more, then $p$ can be a CPAI for directions not parallel to $F$.

We have to be careful because the geometry of the polytope near a face of codimension $k$ may not be the same as the product of a $k$-dimensional orthant with $\R^{d-k}$. As an example of what happens when we apply a monomial transformation in such a case, the apex of a square pyramid is contained in four facets, so near its apex, a square pyramid is not diffeomorphic to an orthant in $\R^3$, and no monomial coordinate change can make each of the facets containing the apex lie in a coordinate hyperplane. Let $H = 1 + x + y + xy + z$. Then $Q$ is a square pyramid, and a factor of 2 is sufficient to make $Q$ normal. If $F$ is the apex, $F$ is the intersection of four facets, with normals $\begin{bmatrix} 1 \\ 0 \\ 0 \end{bmatrix}$, $\begin{bmatrix} 0 \\ 1 \\ 0 \end{bmatrix}$, $\begin{bmatrix} -1 \\ 0 \\ -1 \end{bmatrix}$, and $\begin{bmatrix} 0 \\ -1 \\ -1 \end{bmatrix}$. To construct a monomial transformation, we have to choose three of these four normal vectors to be able to transform them into the three coordinate vectors. This monomial transformation can be constructed by placing these vectors into the rows of $N^T = \begin{bmatrix} 1 & 0 & 0 \\ 0 & 1 & 0 \\ -1 & 0 & -1 \end{bmatrix}$. The resulting transformation $\tau_N^*$ on $\mathbb{C}[x,x^{-1},y,y^{-1},z,z^{-1}]$ is given by $\tau_N^*(x) = xz^{-1}$, $\tau_N^*(y) = y$, and $\tau_N^*(z) = z^{-1}$. We divide by $z$ and apply the monomial transformation: $\overline{H} = \tau_N^*(H/z) = z + x + yz + xy + 1$ has a constant term and no negative powers (at all, because $F$ has dimension zero). However, $\overline{H}$ has no term involving $y$ alone, meaning that $y$ is not guaranteed to approach zero in a sequence $\sequw \subseteq (\mathbb{C}^*)^d$ whose images in $X_{\overline{A}}$ converge to the apex.

	If the polytope $Q$ is \textit{simple}, meaning that every vertex of $Q$ is adjacent to precisely $d$ edges (and no more), then the problem goes away immediately. If we let $\vv$ be a vertex of a face $F$, then the nonnegative span of $\tilde{Q} = Q - \vv$ is a simplicial cone $\sigma_{\vv}$ (the cone over a $(d-1)$-simplex, the only ($d-1$)-dimensional polytope with only $d$ vertices), and the faces of $Q$ containing $\vv$ correspond to faces of $\sigma_{\vv}$, which are the nonnegative linear spans of all subsets of the $d$ linearly independent vectors that generate $\sigma_{\vv}$. In particular, any face $F$ of codimension $k$ is the nonnegative linear span of some ($d-k$)-element subset of these vectors, and the facets containing $F$ are in correspondence with the ($d-1$)-element subsets of the generators of $\sigma_{\vv}$ containing the $d-k$ generators of $F$, of which there are exactly $k$.
	
	
	Rather than restrict to simple polytopes, the following definition allows us to conclude all geometric facts related to simple polytopes that we need for our purposes.
	\begin{defn}[modified simple condition]
		Say that $Q$ satisfies the {\em modified simple condition} if $\sigma_F/L_F$ is a cone over a ($k-1$)-simplex. This is the same thing as requiring that $F$ is contained in exactly $k$ (and no more) faces of $Q$ of dimension $1 + \dim F$.  
	\end{defn}
	
	Lemma \ref{facecorresp} below shows that for any convex polytope $Q$, we can take a small neighborhood around a point $q$ in the interior of $F$, intersect it with $Q$, and the cone we get from that is $\sigma_F$ and has faces $G'$ that correspond exactly to $Q$'s faces $G$ of the same dimension that contain $F$.
	
	\begin{lem}
		\label{facecorresp}
		Let $F$ be a face of $Q$ of codimension $k$. Then $\sigma_F$ is the nonnegative linear span of $\tilde{Q} - q$, where $q$ is a point in the interior of $\tilde{F}$, and $\sigma_F$ is also the intersection of the defining half-spaces of $\tilde{Q}$ whose bounding hyperplanes contain $\tilde{F}$.  Consequently, the ($d-k+r$)-dimensional faces of the cone $\sigma_F$ (or alternatively, the $r$-dimensional faces of the pointed cone $\sigma_F/L_F$) are in one-to-one-correspondence with the ($d-k+r$)-dimensional faces of $\tilde{Q}$ that contain $\tilde{F}$ (or alternatively, with the ($d-k+r$)-dimensional faces of $Q$ that contain $F$).
	\end{lem}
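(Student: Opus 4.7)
The plan is to prove the three assertions in sequence: parts (1) and (2) give a V-description and an H-description of $\sigma_F$ respectively, after which part (3) is a face correspondence that follows from the H-description. Throughout, I use the equivalent formulation $\sigma_F = L_F + \operatorname{cone}(\tilde Q)$. For part (1), the containment $\operatorname{cone}(\tilde Q - q) \subseteq \sigma_F$ is immediate because $-q \in L_F \subseteq \sigma_F$ (as $q \in \tilde F \subseteq L_F$). For the reverse, the key point is that $q$ in the relative interior of $\tilde F$ forces $\tilde Q - q$ to contain an open neighborhood of $0$ inside $L_F$, so its conic hull already contains all of $L_F$; combined with the identity $z = (z-q) + q$ for $z \in \tilde Q$ (which writes $z$ as a sum of two elements of $\operatorname{cone}(\tilde Q - q)$), this gives $L_F + \operatorname{cone}(\tilde Q) \subseteq \operatorname{cone}(\tilde Q - q)$.

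For part (2), let $C'$ denote the intersection of defining half-spaces $H_i^+$ of $\tilde Q$ whose bounding hyperplanes $H_i$ contain $\tilde F$; each such $H_i$ passes through $0 \in \tilde F$, so $C' = \bigcap_i \{x : n_i \cdot x \geq 0\}$. The inclusion $\sigma_F \subseteq C'$ is immediate since each $n_i$ is nonnegative on $\tilde Q$ and vanishes on $L_F$. For the reverse, given $v \in C'$ I would show $q + \epsilon v \in \tilde Q$ for all sufficiently small $\epsilon > 0$; part (1) then yields $v = \epsilon^{-1}\bigl((q+\epsilon v) - q\bigr) \in \sigma_F$. The inequalities whose hyperplane contains $\tilde F$ reduce to $\epsilon\, n_i \cdot v \geq 0$ and hold automatically; the remaining facet inequalities are satisfied strictly at $q$ (any such hyperplane cuts $\tilde F$ only in a proper subface, which avoids the relative interior), so a small enough perturbation stays on the correct side, and finiteness of facets provides a uniform $\epsilon$.

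For part (3), to each face $\tilde G$ of $\tilde Q$ containing $\tilde F$ I associate $\tau_{\tilde G} := L_F + \operatorname{cone}(\tilde G)$. A linear functional $\ell_G$ cutting out $\tilde G$ from $\tilde Q$ vanishes on $\tilde G \supseteq \tilde F$ and hence on $L_F$, and is nonnegative on $\tilde Q$; a short calculation then gives $\sigma_F \cap \{\ell_G = 0\} = \tau_{\tilde G}$, exhibiting $\tau_{\tilde G}$ as a face of $\sigma_F$ of dimension $\dim L_{\tilde G} = \dim \tilde G$ (using $L_F \subseteq L_{\tilde G}$). Conversely, every supporting functional of $\sigma_F$ is nonnegative on $\tilde Q$ and vanishes on $L_F \supseteq \tilde F$, so it cuts $\tilde Q$ in a face containing $\tilde F$ that maps back to the given face of $\sigma_F$; injectivity holds because $\tilde G = \tilde Q \cap L_{\tilde G}$ is recoverable from $\tau_{\tilde G}$. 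Quotienting by $L_F$ then translates dimensions: a face of $\sigma_F$ of dimension $d-k+r$ corresponds to an $r$-dimensional face of $\sigma_F / L_F$ and to a $(d-k+r)$-dimensional face of $\tilde Q$ containing $\tilde F$.

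The main obstacle I anticipate is the strict-inequality argument in part (2): it requires the relative interior hypothesis on $q$ in an essential way, since otherwise there could be a facet hyperplane that passes through $q$ but not through $\tilde F$, and then no $\epsilon > 0$ would keep $q + \epsilon v$ inside $\tilde Q$ for a general $v \in C'$. Everything else is essentially polyhedral bookkeeping, and part (3) amounts to the standard identification of faces of the tangent cone at $q$ with faces of $\tilde Q$ passing through $q$.
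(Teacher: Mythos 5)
Your proposal is correct and follows essentially the same route as the paper: both arguments hinge on choosing $q$ in the relative interior of $\tilde F$ so that the only defining hyperplanes of $\tilde Q$ active at $q$ are those containing $\tilde F$, identifying $\sigma_F$ with the tangent cone of $\tilde Q$ at $q$ (your $q+\epsilon v$ perturbation is exactly the paper's small-neighborhood argument), and then reading off the face correspondence from the resulting inequality description. Your part (3) is merely a more explicit, supporting-functional version of the paper's ``same equations and inequalities'' bookkeeping.
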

	\begin{proof}
		Let $q$ be a point in the interior of $\tilde{F}$, and let $N$ be a neighborhood of $q$ sufficiently small such that, out of all the hyperplanes that define the convex polytope $\tilde{Q}$, the only ones that intersect $N$ are those containing $\tilde{F}$. Then the nonnegative linear span of $(N \cap \tilde{Q}) - q$ (subtracting $q$ from every point, not set minus) is the cone $\tilde{\sigma}_F$ given by the intersection of the defining half-spaces of $\tilde{Q}$ whose bounding hyperplanes contain $\tilde{F}$ (because none of the other defining hyperplanes intersect $N$). We now have that the nonnegative linear span of $\tilde{Q}-q$ both contains $\tilde{\sigma}_F$ (the nonnegative linear span of $(N \cap \tilde{Q}) - q$) and is contained in $\sigma_F$ (the sum of $L_F$ and the nonnegative linear span of $\tilde{Q}$), while simultaneously, $\tilde{Q}$ and $L_F$ (and therefore also $\sigma_F$) are contained in the intersection of the defining half-spaces of $\tilde{Q}$ whose bounding hyperplanes contain $\tilde{F}$, which is $\tilde{\sigma}_F$. Therefore, $\tilde{\sigma}_F = \sigma_F$.
		
		Similarly, the corresponding face of $\sigma_F$ to a face $G$ of $\tilde{Q}$ containing $\tilde{F}$ is the nonnegative linear span of $(N \cap G) - q$, which is the sum of $L_F$ and the nonnegative linear span of $G$, which has the same dimension as $G$ and is also the face of $\sigma_F$ cut out by the same set of equations and inequalities that cut out $G$ as a face of $\tilde{Q}$ (except for the inequalities whose hyperplanes do not contain $\tilde{F}$). A face $G'$ of $\sigma_F$ corresponds to its intersection with $\tilde{Q}$, the face of $\tilde{Q}$ cut out by the same set of equations and inequalities that cut out $G'$ as a face of $\sigma_F$ (in addition to all of the defining inequalities of $\tilde{Q}$ whose hyperplanes do not contain $\tilde{F}$). Note that a supporting hyperplane for $G$ as a face of $\tilde{Q}$ is also a supporting hyperplane for $G'$ as a face of $\sigma_F$, and vice versa.
	\end{proof}
	
	Now we can see how the hypotheses that $Q$ is modified simple helps us transform $H$ into a Laurent polynomial $\overline{H}$ in a ``nice" form.
	
	\begin{lem}
		\label{modsimple}
		In addition to the hypotheses of Lemma \ref{seqtransf}, suppose that $\sigma_F/L_F$ is simplicial. Then $F$ is the intersection of precisely $k$ facets $F_1, ..., F_k$ and no proper subset of them. For convenience, denote the $d$ variables of $\overline{H}$ by $x_1, ..., x_{d-k}, y_1, ..., y_k$. Then for a suitable choice of $N$, $\overline{H}$ has a term that involves $y_j$ to a strictly positive power (and possibly also some subset of the $x$ variables, possibly to negative powers) but does not involve $y_{j'}$ for any $j' \neq j$.
	\end{lem}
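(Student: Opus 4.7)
The plan has two parts, matching the two claims.

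\textbf{Facet count.} By Lemma \ref{facecorresp}, the faces of $Q$ of dimension $d-k+r$ containing $F$ correspond bijectively to the $r$-dimensional faces of the pointed cone $\sigma_F/L_F$. In particular, facets of $Q$ through $F$ correspond to facets of $\sigma_F/L_F$. The modified simple hypothesis says $\sigma_F/L_F$ is a simplicial cone, i.e.\ the nonnegative span of $k$ linearly independent rays, so it has exactly $k$ facets (the spans of the $(k-1)$-element subsets of those rays), and their inward normals are linearly independent. Translating back, there are exactly $k$ facets $F_1,\dots,F_k$ of $Q$ containing $F$, their intersection is $F$, and no proper subset intersects down to $F$. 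The analogous statements hold in $\NP$ for $\mathcal{F}_1,\dots,\mathcal{F}_k$ and $\mathcal{F}$ by scaling.

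\textbf{Choice of $N$ and the $y$-exponents.} In the matrix $N$ of Definition \ref{montransf}, the last $k$ columns are the inward-pointing normals $\mathbf{n}_{\mathcal{F}_1},\dots,\mathbf{n}_{\mathcal{F}_k}$, which are linearly independent by the previous step; the first $d-k$ columns are any integer vectors completing $N$ to a matrix invertible over $\mathbb{Q}$. For any lattice point $\mathbf{m}\in\NP$, the exponent of the corresponding monomial of $\overline{H}=\tau_N^*(\mathbf{z}^{-\mathfrak{v}}H)$ is $N^T(\mathbf{m}-\mathfrak{v})$, whose $(d-k+j')$-th entry---i.e.\ the exponent of $y_{j'}$---equals $\mathbf{n}_{\mathcal{F}_{j'}}\cdot(\mathbf{m}-\mathfrak{v})$. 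By the defining property of the inward normal this quantity is nonnegative, and it vanishes if and only if $\mathbf{m}\in\mathcal{F}_{j'}$. Hence a monomial of $\overline{H}$ involves $y_j$ alone (positive $y_j$-exponent, zero exponent on every other $y_{j'}$) precisely when $\mathbf{m}\in\bigl(\bigcap_{j'\neq j}\mathcal{F}_{j'}\bigr)\setminus\mathcal{F}_j$.

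\textbf{Producing the term.} Fix $j$ and consider $\mathcal{G}_j:=\bigcap_{j'\neq j}\mathcal{F}_{j'}$. By the facet count above and Lemma \ref{facecorresp} applied with $r=1$, $\mathcal{G}_j$ is a face of $\NP$ of dimension $d-k+1$, properly containing $\mathcal{F}$. Consequently $\mathcal{G}_j$ has a vertex $\mathfrak{v}'_j$ lying outside $\mathcal{F}$; as a vertex of a face of $\NP$, it is itself a vertex of $\NP$, so $H$ carries a nonzero term at $\mathfrak{v}'_j$. Since $\mathfrak{v}'_j\in\mathcal{F}_{j'}$ for every $j'\neq j$ but $\mathfrak{v}'_j\notin\mathcal{F}=\bigcap_{l}\mathcal{F}_l$, we must have $\mathfrak{v}'_j\notin\mathcal{F}_j$. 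By the computation of the previous paragraph, the corresponding monomial of $\overline{H}$ has strictly positive $y_j$-exponent and zero exponent on every other $y_{j'}$; its $x$-exponents---the first $d-k$ entries of $N^T(\mathfrak{v}'_j-\mathfrak{v})$---can be arbitrary integers, accounting for the possibility of negative powers of the $x$'s.

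The main obstacle is the geometric bookkeeping that guarantees in Part 3 the existence of the face $\mathcal{G}_j$ of dimension \emph{exactly} $d-k+1$ and hence of a vertex outside $\mathcal{F}$. It is precisely the simpliciality of $\sigma_F/L_F$ that forces both the facet count in Part 1 and the jump-by-one in dimension for each one-facet-removed intersection, so that such a vertex must exist. The square-pyramid example immediately preceding the lemma illustrates the failure mode without this hypothesis: the apex lies on four facets, no choice of $N$ can send each of them into a coordinate hyperplane, and $\overline{H}$ can genuinely lack a monomial involving a single $y$-variable.
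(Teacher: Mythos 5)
Your proof is correct and follows essentially the same route as the paper's: Lemma \ref{facecorresp} plus simpliciality gives exactly $k$ facets through $F$ with linearly independent inward normals, the $y_{j'}$-exponent of the monomial coming from $\mathbf{m}$ is $\mathbf{n}_{\mathcal{F}_{j'}}\cdot(\mathbf{m}-\mathfrak{v})$ (nonnegative, vanishing exactly on $\mathcal{F}_{j'}$), and a vertex of $\bigcap_{j'\neq j}\mathcal{F}_{j'}$ lying outside $\mathcal{F}$ supplies the required term. The only cosmetic difference is how the proper containment $\bigcap_{j'\neq j}\mathcal{F}_{j'}\supsetneq\mathcal{F}$ is certified: you use the dimension count from the face correspondence with $r=1$, while the paper exhibits an explicit point $q+\epsilon\mathbf{v}_j$ in that intersection but not in $\mathcal{F}_j$.
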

	
	\begin{proof}
		If the $k$-dimensional pointed cone $\sigma_F/L_F$ is simplicial (viewed as being within the orthogonal complement of $L_F$ and generated by $k$ linearly independent vectors $\vv_1, ..., \vv_k$), then $\sigma_F/L_F$ has precisely $k$ facets (each facet $F_j$ is the set of all nonnegative linear combinations of the ($k-1$)-element subset of $\{\vv_1, ..., \vv_k\}$ that does not contain $\vv_j$), so by Lemma \ref{facecorresp}, $Q$ has precisely $k$ facets $F_1, ..., F_k$ containing $F$. The facet normals (within $L_F^{\perp}$) of the $k$ facets of $\sigma_F/L_F$ (which are the same as the facet normals of the $k$ facets of $\tilde{Q}$ containing $\tilde{F}$) are linearly independent (because if there were a linear relation $\sum_{j=1}^k c_j \mathbf{n}_{F_j}$ with some $c_j$ nonzero, then taking the dot product with the generator $\vv_j$ that is in every $F_{j'}$ except for $F_j$, would yield the contradiction that $\vv_j$ is also in $F_j$). Then $\cap_{j=1}^k \tilde{F}_j$ contains $\tilde{F}$, and because it is $(d-k)$-dimensional, it is contained in $L_F \cap \tilde{Q}$, which is just $\tilde{F}$ because $\tilde{F}$ is a face of $\tilde{Q}$. Therefore, $N$ can be chosen to be an invertible matrix whose last $k$ columns are the normals to the $k$ facets whose intersection is $F$.
		
		To show that every proper subset of the $F_j$ has intersection properly containing $F$, it suffices to find a point that is in $F_{j'}$ for every $j' \neq j$ but is not in $F_j$. This need not be a lattice point, so as in the proof of Lemma \ref{facecorresp}, we can simply take a sufficiently small neighborhood $U$ around a point $q$ in the interior of $\tilde{F}$, and $U \cap \tilde{Q}$ contains a point of the form $q + \epsilon \vv_j$ that is in every $F_{j'}$ except $F_j$.
		
		We now have that $\cap_{j' \neq j} F_{j'}$ is a face that properly contains $F$, so there exists a \textit{vertex} $\mathbf{u}_j$ of $\tilde{\NP}$ that is contained in every $\tilde{\FF}_{j'}$ except $\tilde{\FF}_j$ (where $\tilde{\FF}_j = \tilde{\FF} - \mathfrak{v}$ as usual). Then the term of $\mathbf{z}^{-\mathfrak{v}}H$ with exponent vector $\mathbf{u}_j$ is mapped by $\tau_N^*$ to a term of $\overline{H}$ with exponent vector $N^T \mathbf{u}_j$, which has $y_{j'} = 0$ for all $j' \neq j$, and $y_j > 0$ because $\mathbf{u}_j \not\in F_j$.
	\end{proof}
	
	This seems like a stringent condition, but it is actually much better than it sounds: because every pointed convex cone in 1 or 2 dimensions is simplicial, every face of codimension 1 or 2 (even when $d$ is large) will satisfy this modified simple condition! A sequence in $\sing^*$ cannot converge to a vertex in $X_A$ (because when approaching the point in $X_A$ corresponding to a vertex $\mathbf{v}$, the monomial $\zz^{\mathfrak{v}}$ has a nonzero coefficient and dominates all the other monomials of $H$), so all three-dimensional examples will satisfy this condition at every face $F$ to which a sequence in $\sing^*$ could converge. The first time that it can actually affect asymptotics is in four dimensions, in cases where the generic condition manages to fail at a point on a face of codimension 3.
	
	\subsection{Under certain conditions, $p$ can be a CPAI for a codimension-1 set of directions, but not a set of full dimension.}
	
	Now, we would like to find the set of directions in which a point $p \in X^0(F)$ can be a CPAI whenever $F$ satisfies the modified simple condition that $\sigma_F/L_F$ is simplicial. 
	By Lemmas \ref{seqtransf} and \ref{modsimple}, we may now assume that we are in the following case:
	\begin{enumerate}
		\item $H$ is a Laurent polynomial with no negative powers of any of the last $k$ variables $y_1, ..., y_k$.
		\item The sequence $\sequ$ converges to a point $p$ in the face at infinity corresponding to the face $F$ of codimension $k$ that is contained in the intersection of the last $k$ coordinate hyperplanes.
		\item For each $j \in \{1, ..., k\}$, $H$ has at least one term $c_{\mathbf{m}_j} \mathbf{z}^{\mathbf{m}_j}$ that involves the variable $y_j$ to a strictly positive power (and possibly also the first $d-k$ variables, including negative powers) but does not involve $y_{j'}$ for any $j' \neq j$.
		\item The intersection of $Q$ with the coordinate hyperplane $\{y_j = 0\}$ is a facet $F_j$, and $F$ is the intersection of the $k$ facets $F_1, ..., F_k$ and (crucially) no proper subset of them.
	\end{enumerate}
	
	As we have seen in Sections \ref{transfpoly} and \ref{secsimple}, if $p$ lies in any face satisfying the modified simple condition (in particular, if $F$ has codimension 1 or 2), then $\mathbf{z}^{-\mathfrak{v}} H$ can be monomially transformed to a polynomial $\overline{H}$ for which there is a sequence $\left\{\mathbf{w}_n\right\}_{n=1}^{\infty}$ (playing the role of $\seq$) that makes the above statements true. In this simplified case, we will find that $\mathbf{w}_n$ converges in $\mathbb{C}^d$ to a point $Z$.
	
	\begin{lem}
		\label{affineconv}
		Suppose that $H$ and $\sequ$ satisfy conditions 1-4 above. Then $\seq$ converges to a point $Z = (X_1, ..., X_{d-k}, 0, ..., 0) \in \mathbb{C}^d$ whose first $d-k$ components are nonzero and whose last $k$ are zero.
	\end{lem}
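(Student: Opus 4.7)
The plan is to apply Theorem~\ref{monomials}(2) to each standard basis vector of $\mathbb{R}^d$, once I identify $L_F$ and $\sigma_F$ explicitly in the coordinates provided by conditions~1--4. By condition~2, $F$ has dimension $d-k$ and is contained in the $(d-k)$-dimensional coordinate subspace $\{y_1 = \cdots = y_k = 0\}$, so a dimension count forces $L_F = \text{span}(\mathbf{e}_1,\ldots,\mathbf{e}_{d-k})$. For $\sigma_F$, I would invoke Lemma~\ref{facecorresp} to describe it as the intersection of those defining half-spaces of $\tilde{Q}$ whose bounding hyperplanes contain $\tilde{F}$. Conditions~1 and~4 (inherited from the modified simple hypothesis folded into the setup) guarantee that $F_1,\ldots,F_k$ are precisely the facets of $Q$ containing $F$, and condition~1 identifies their defining half-spaces as $\{y_j \geq 0\}$. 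Hence $\sigma_F = \mathbb{R}^{d-k} \times \mathbb{R}_{\geq 0}^k$.

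With these identifications in hand, I would apply Theorem~\ref{monomials}(2) componentwise. For each $i \in \{1,\ldots,d-k\}$, the standard basis vector $\mathbf{e}_i$ lies in $L_F$, so the $i$-th coordinate $(\seq)_i = \seq^{\mathbf{e}_i}$ converges to a finite nonzero limit $X_i := p^{\mathbf{e}_i}$ that depends only on $p$. For $j \in \{d-k+1,\ldots,d\}$, the vector $\mathbf{e}_j$ belongs to $\sigma_F \setminus L_F$ (its $y$-coordinates are nonnegative with one of them equal to $1$), so $(\seq)_j = \seq^{\mathbf{e}_j} \to 0$. Assembling these componentwise limits gives $\seq \to Z := (X_1,\ldots,X_{d-k},0,\ldots,0)$.

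The argument is entirely polytope bookkeeping; once $L_F$ and $\sigma_F$ are correctly described in the coordinates at hand, the lemma reduces to a clean application of the Laurent-monomial convergence theorem. The only subtle point worth verifying carefully is that condition~4, together with the modified simple setup, really enumerates \emph{every} facet of $Q$ containing $F$ (not merely a spanning collection): an additional facet through $F$ would cut $\sigma_F$ down to a proper subcone and could potentially exclude one of the $\mathbf{e}_j$, so this is the place where the modified simple hypothesis is doing the real work.
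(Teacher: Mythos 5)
Your proof is correct and follows essentially the same route as the paper's: identify $\sigma_F = \mathbb{R}^{d-k}\times\mathbb{R}_{\geq 0}^{k}$ and $L_F = \mathrm{span}(\mathbf{e}_1,\ldots,\mathbf{e}_{d-k})$ from conditions 1--4, then apply Theorem~\ref{monomials}(2) to each coordinate monomial. Your closing remark about condition~4 needing to enumerate \emph{all} facets containing $F$ matches the paper's own parenthetical caveat about the modified simple condition, so nothing is missing.
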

	\begin{proof}
		In this case, we have that $\sigma_F$ is all of $\mathbb{R}^{d-k} \times \mathbb{R}_{\geq 0}^k$, so by Lemma \ref{monomials}, the monomials $x_1, ..., x_{d-k}$ evaluated at $\seq$ (in other words, the first $d-k$ components of $\seq$) converge to finite nonzero values (call them $X_1, ..., X_{d-k}$), and the monomials $y_1, ..., y_k$ evaluated at $\seq$ (in other words, the last $k$ components of $\seq$) converge to zero. (If $F$ had not satisfied the modified simple condition, we could perhaps still have performed a monomial transformation, but we would not be guaranteed this convergence of each $y_j$ to zero.) Therefore, the sequence $\seq$ converges in $\mathbb{C}^d$ to a point $Z = (X_1, ..., X_{d-k}, 0, ..., 0)$.
	\end{proof}
	
	By Assumption (1.) above, we know that $H$, $\grad H$, and $\grad_{\log} H$ are all continuous at $Z$. We know that $H(Z) = 0$ because $\sequ \subseteq \sing$ converges to $Z$. No further rescaling is necessary for the log-gradient because the origin is now in $F$, so if $\grad_{\log} H(Z) \neq \mathbf{0}$, then Theorem \ref{genericresult} would already have given us that the limiting log-gradient direction at $p$ is unique and parallel to $F$. (We could show it more easily now in our special case, but Theorem \ref{genericresult} does not depend on whether or not $F$ satisfies the modified simple condition.)
	
	We now investigate the case where $\grad_{\log} H(Z) = \mathbf{0}$. In this case, it is not hard to see that $\grad H(Z)$ must be perpendicular to $F$: for $j \in \{1, ..., d-k\}$, the $j^{\mathrm{th}}$ component of $\grad_{\log} H(Z)$ is $X_j$ (a finite nonzero number) times the corresponding component of $\grad H(Z)$, so if $\grad_{\log} H(Z) = \mathbf{0}$, then the only entries of $\grad H(Z)$ that can be nonzero are the last $k$. This, of course, does not rule out the possibility that the gradient of $H$ could vanish at $Z$. However, recall that the example in Section \ref{unexpected} had a heighted CPAI in a direction not parallel to any face of $Q$, even though the gradient of $H$ at $Z = (1,0,0)$ is (0,-1,1). (Of course, $\grad_{\log} H$ at the same point is the zero vector; otherwise, Theorem \ref{genericresult} would have implied that the limiting log-gradient direction must be unique and parallel to $F$.) Therefore, we will still be answering interesting questions if we make the further assumption that the gradient does not vanish at $Z$. However, it would be interesting to see when it is possible to ``resolve" examples with singularities at the limit point into examples where the variety is smooth there (such as the paraboloid). We will assume, therefore, that $\grad_{\log} H(Z) = \mathbf{0}$ but $\grad H(Z) \neq \mathbf{0}$.
	
	\begin{thm}
		\label{nongeneric}
		Suppose that $H$ and $\sequ$ satisfy conditions 1-4, and that at the point $Z$, $\grad_{\log} H = \mathbf{0}$ but $\grad H \neq \mathbf{0}$. If the Jacobian (on $\sing$) of $\grad_{\log} H$ at $Z$ is of full rank $d-1$, then the space of limiting log-gradient directions of $H$ for sequences converging to $Z$ (that is, the set of directions for which $p$ is a CPAI) has codimension 1 and includes all directions parallel to $F$; consequently, if $F$ is not a facet, then also some the set of directions for which $p$ is a CPAI includes some directions not parallel to $F$.
	\end{thm}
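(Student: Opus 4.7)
The plan is to linearize $\grad_{\log} H$ at $Z$ along $\sing$ and read off the set of projective limits from the image of the resulting Jacobian. Writing the $i$-th component of $\grad_{\log} H$ as $z_i H_i$ with $H_i = \partial H/\partial z_i$, the equation $\grad_{\log} H(Z) = \mathbf{0}$ forces $H_i(Z) = 0$ for $i = 1,\dots,d-k$ (because $X_i \neq 0$) and is automatic for $i > d-k$ (because the last $k$ coordinates of $Z$ vanish). Since $\grad H(Z) \neq \mathbf{0}$, at least one of $H_{d-k+1}(Z),\dots,H_d(Z)$ must be nonzero; hence $\sing$ is smooth at $Z$, and because $\grad H(Z)$ has vanishing first $d-k$ coordinates, $T_Z \sing$ contains the entire subspace of directions parallel to $F$, namely $L_F = \C^{d-k}\times\{0\}^k$.

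Next, applying the product rule to $\partial(z_i H_i)/\partial z_j$ at $Z$ yields the block-triangular form
$$J_{\C^d}(Z) \;=\; \begin{pmatrix} A & B \\ 0 & D \end{pmatrix},$$
with $A_{ij} = X_i H_{ij}(Z)$ on the upper-left $(d-k)\times(d-k)$ block, a $(d-k)\times k$ upper-right block $B$, a diagonal lower-right block $D = \mathrm{diag}(H_{d-k+1}(Z),\dots,H_d(Z))$ coming from the surviving Kronecker-delta terms, and a vanishing lower-left block (where both $\delta_{ij}$ and $z_i$ are zero at $Z$). For any $\sequ \subset \sing$ with $\seq\to Z$, smoothness of $\sing$ at $Z$ lets me decompose $\seq - Z = \eta_n + \rho_n$ with $\eta_n \in T_Z\sing$ and $\rho_n = O(\|\eta_n\|^2)$; Taylor expansion then gives $\grad_{\log} H(\seq) = J_{\C^d}(Z)\eta_n + O(\|\eta_n\|^2)$. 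The full-rank hypothesis makes $J_{\C^d}(Z)|_{T_Z\sing}$ injective, so the linear term dominates whenever $\eta_n\neq 0$, and the projective limits of $\grad_{\log} H(\seq)$ are exactly the points of $\mathbb{P}\bigl(J_{\C^d}(Z)(T_Z\sing)\bigr) \subset \CP^{d-1}$.

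Finally, I would identify that image. Injectivity of $J_{\C^d}(Z)|_{T_Z\sing}$ forces it to be a complex hyperplane in $\C^d$, so its projectivization has codimension $1$ in $\CP^{d-1}$, giving the first half of the claim. To see that it contains $L_F$: any $(\xi',0) \in L_F \subseteq T_Z\sing$ maps to $(A\xi',0) \in L_F$, and if $A\xi' = 0$ then $(\xi',0)$ lies in the (trivial) kernel of $J_{\C^d}(Z)|_{T_Z\sing}$, forcing $\xi'=0$. Thus $A$ is invertible and $J_{\C^d}(Z)(L_F) = L_F$. When $F$ is not a facet, $k\ge 2$, so $\dim L_F = d-k < d-1$ equals the hyperplane's dimension, the containment is strict, and the hyperplane therefore contains CPAI directions not parallel to $F$.

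The main obstacle is the linearization step: one must justify that every projective limit of $\grad_{\log} H(\seq)$ along sequences in $\sing$ through $Z$ is captured by the linear term, and conversely that every nonzero vector in $J_{\C^d}(Z)(T_Z\sing)$ is realized as such a limit. The latter follows by parameterizing $\sing$ near $Z$ via a smooth chart and rescaling along straight rays. The former relies crucially on the full-rank assumption, without which the $O(\|\eta_n\|^2)$ remainder could compete with the linear term and obscure the limiting direction.
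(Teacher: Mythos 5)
Your proposal is correct and follows essentially the same route as the paper: both linearize $\grad_{\log} H$ along the smooth hypersurface at $Z$ and identify the set of limiting log-gradient directions with the projectivized image of the full-rank Jacobian restricted to the $(d-1)$-dimensional tangent space $T_Z \sing$. Your block-triangular form of the ambient Jacobian (giving $J(L_F)=L_F$ directly) is just a cleaner packaging of the paper's column-by-column product-rule computation showing that the first $d-k$ columns have vanishing last $k$ entries.
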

	
	\begin{proof}
		By the implicit function theorem, $V(H)$ can be locally parameterized near $p$ by its tangent space at $Z$; in other words, for points $(\zz + Z) \in \sing$ sufficiently close to $Z$, the component of $\zz$ that is parallel to $\grad H$ can be written as a function $G$ of the $(d-1)$-dimensional component perpendicular to $\grad H$, and the directional derivatives of $G$ in each of these $d-1$ directions is zero at $\zz = \mathbf{0}$ (like the vertex of a paraboloid). Let $B$ be an orientation-preserving orthogonal matrix whose first $d-k$ columns are the first $d-k$ standard basis vectors (because those directions are always orthogonal to $\grad H(Z)$ when $\grad_{\log} H(Z) = \mathbf{0}$), the next $k-1$ columns are an orthonormal basis for the remaining directions perpendicular to $\grad H(Z)$, and the last column is $\grad H(Z)$ divided by its norm. 
		
		We now examine $\grad_{\log} H$ evaluated at $(B\zz + Z)$, where $$\zz = (W_1, ..., W_{d-1}, G(W_1, ..., W_d-1))$$ is such that $(B\zz + Z) \in V(H)$. Now $\grad_{\log} H(B\mathbf{z} + Z)$ can be expanded in a Taylor series as a function of $\mathbf{w} = (W_1, ..., W_{d-1})$ to first order about $\mathbf{w} = \mathbf{0}$ to give that $\grad_{\log} H(B\mathbf{z} + Z)$ is locally $\mathbf{0} + J \mathbf{w}$ (plus terms whose magnitude approaches $\mathbf{0}$ faster than $J\mathbf{w}$), provided that the $d$-by-($d-1$) Jacobian $J$ is of full rank $d-1$ (has linearly independent columns) so that $J\mathbf{w}$ is nonzero for small nonzero $\mathbf{w}$ and gives a well-defined limiting log-gradient direction when $\mathbf{w}$ approaches $\mathbf{0}$ in a given direction.  In this case, the set of limiting log-gradient directions for sequences $\zz_n$ approaching $Z$ (and therefore the set of directions for which $p$ is a CPAI) is the set of directions in the column space of $J$, which is a set of codimension 1. 
		
		Now we need to see why $p$ is a CPAI in every direction parallel to $F$. More specifically, the first $d-k$ columns of $J$ are all parallel to $F$ (and, being linearly independent, they span all the directions parallel to $F$). To see why the last $k$ components of the first $d-k$ columns of $J$ are all zero, notice that each of the last $k$ components of $\grad_{\log} H$ is of the form $y_j \frac{\partial H}{\partial y_j}$, and when evaluating at $(B\zz + Z)$, that $y_j$ factor becomes a linear combination of $W_{d-k+1}, ..., W_{d-1}, G$ with no constant term. This clearly evaluates to zero at $\mathbf{w} = \mathbf{0}$, so if its derivatives with respect to each of $W_1, ..., W_{d-k}$ do as well, then we are done by the product rule. Terms involving $G$ have first derivatives with respect to all the $W_j$ variables equal to zero, and the remaining terms all have a factor of $W_j$ for some $j \geq d-k+1$; these do not depend on $W_1, ..., W_{d-k}$ and are evaluated to zero.
		
		If $F$ is a facet ($k - 1 = 0$), then this shows that $p$ is a CPAI for all the directions parallel to $F$, and no other directions.
	\end{proof}
	
	For concreteness, I will briefly compute the space of limiting log-gradient directions in the example in Section \ref{unexpected}. We know what the answer should be: all directions in the span of [1,0,0] and [0,-1,1]. In this example, there is no need to perform a monomial transformation because $F$ (of codimension $k=2$) is already along the $x$-axis. We see that $\grad_{\log} H(1,0,0) = \mathbf{0}$ and that $\grad H(1,0,0) = (0,-1,1)$ is perpendicular to $F$ as expected. The matrix $B$ can therefore be taken to be $\begin{bmatrix} 1 & 0 & 0 \\ 0 & u & -u \\ 0 & u & u \end{bmatrix}$, where $u = \frac{\sqrt{2}}{2}$. In this example, we can parameterize $\sing$ explicitly (namely, $-uy + uz = u(x-1)^2$ for $(x,y,z) \in \sing$, so $G(W_1, W_2) = u W_1^2$ does not even depend on $W_2$), but this is not necessary; we only need the basic fact that $G$ is a function whose value and first derivatives with respect to $W_1$ and $W_2$ are zero. We have that $B\zz + Z = (W_1 + 1, uW_2 - uG, uW_2 + uG)$, so $$\grad_{\log} H(B\mathbf{z} + Z) = (-2(W_1 + 1)W_1, -uW_2 + uG, uW_2 + uG)$$ has Jacobian matrix $$J = \begin{bmatrix} -2 & 0 \\ 0 & -u \\ 0 & u \end{bmatrix},$$ so that $p$ is indeed a CPAI for precisely the codimension-1 set of directions spanned by [1,0,0] and [0,-1,1]. (The upper right entry of $J$ happened to be zero in this example, but this is not always the case; this occurred because the first component of the log-gradient happened to depend only on $x$. The factor in the first component of $\grad_{\log} H(B\mathbf{z} + Z)$ that arises from the factor of $x$ in $x \frac{\partial H}{\partial x}$ is the $(W_1 + 1)$, not the $W_1$.)
	
	As an immediate corollary, we have slightly expanded the (already generic) set of polynomials $H$ for which we can determine the directions for which a point $p \in X_A \backslash \Phi((\mathbb{C}^*)^d)$ is a critical point at infinity.
	
	\begin{thm}
		\label{simplecaseresult}
		Suppose that $\sequ \subseteq \sing^*$ has images $\Phi(\seq)$ that converge to a point $p$ in the interior of the face at infinity in $X_A$ corresponding to a face $F$ of $Q$ such that $\sigma_F/L_F$ is simplicial, and that the directions of $\grad_{\log} H(\seq)$ in $\mathbb{CP}^{d-1}$ converge to some direction $R$. For $N$ a suitable monomial transformation matrix given by Lemma \ref{modsimple}, let $\overline{H} = \tau_N^*(\zz^{-\mathfrak{v}} H)$, let $\sequw$ be a sequence given by Lemma \ref{seqtransf}, and let $Z \in \mathbb{C}^d$ be the limit of $\mathbf{w}_n$ given by Lemma \ref{affineconv}. If $\zz_n^{-\mathfrak{v}} \grad_{\log} H$ converges to the zero vector (so that Theorem \ref{genericresult} gives no conclusion), but $\grad \overline{H}(Z) \neq \mathbf{0}$ and the Jacobian $J$ of $\grad_{\log} H(Z)$ on $V(\overline{H})$ is of full rank $d-1$, then $p$ is a CPAI for a codimension-1 set of directions that contains all directions parallel to $F$.
	\end{thm}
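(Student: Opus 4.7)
The plan is to assemble the earlier lemmas of this section and reduce the claim to Theorem~\ref{nongeneric}. First I would invoke Lemmas~\ref{seqtransf} and~\ref{modsimple} to pass to the transformed setting: after extracting a subsequence, $\sequw$ lies in $V(\overline{H}) \cap (\mathbb{C}^*)^d$, its images $\overline{\Phi}(\seqw)$ converge to a point $\overline{p}$ on the face at infinity corresponding to $\overline{F} = N^T \tilde{F}$, and $\overline{H}$ satisfies the four conditions listed just before Lemma~\ref{affineconv}. Lemma~\ref{affineconv} then gives $\mathbf{w}_n \to Z$ in $\mathbb{C}^d$, with the first $d-k$ coordinates of $Z$ nonzero and the last $k$ equal to zero.

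Next I would verify the hypotheses of Theorem~\ref{nongeneric} at $Z$. By Lemma~\ref{monomgradlog}, on $\sing^*$ the log-gradient of $\zz^{-\mathfrak{v}} H$ equals $\zz^{-\mathfrak{v}} \grad_{\log} H$; combined with the chain rule of Lemma~\ref{monomchain} this yields
\[ \grad_{\log} \overline{H}(\mathbf{w}_n) \;=\; N^T \bigl(\mathbf{z}_{m_n}^{-\mathfrak{v}} \grad_{\log} H(\mathbf{z}_{m_n})\bigr). \]
The hypothesis $\zz_n^{-\mathfrak{v}} \grad_{\log} H \to \mathbf{0}$ therefore forces $\grad_{\log} \overline{H}(\mathbf{w}_n) \to \mathbf{0}$, and continuity at the finite point $Z$ gives $\grad_{\log} \overline{H}(Z) = \mathbf{0}$. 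Together with the standing assumptions $\grad \overline{H}(Z) \neq \mathbf{0}$ and the full-rank Jacobian on $V(\overline{H})$, these are precisely the inputs of Theorem~\ref{nongeneric}, which produces a codimension-1 subspace of $\mathbb{CP}^{d-1}$ of limiting log-gradient directions of $\overline{H}$ at $Z$, containing every direction parallel to $\overline{F}$.

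Finally I would transfer this back to $H$. Reading the displayed identity projectively shows that the set of limiting log-gradient directions of $H$ at $p$ is $N^{-T}$ applied to the analogous set for $\overline{H}$ at $Z$: given any limit direction $\overline{R}$ realized by a sequence in $V(\overline{H}) \cap (\mathbb{C}^*)^d$ approaching $Z$, surjectivity of $\tau_N$ on $(\mathbb{C}^*)^d$ lets us lift to a sequence in $\sing^*$ whose image under $\Phi$ approaches $p$ and whose log-gradient direction converges projectively to $N^{-T} \overline{R}$. Because $N^T$ is a linear isomorphism of $\mathbb{CP}^{d-1}$ it preserves codimension-1 projective subsets, and because $\overline{F} = N^T \tilde{F}$ it carries directions parallel to $F$ bijectively to directions parallel to $\overline{F}$.

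The main obstacle is essentially bookkeeping: carefully reconciling the rescaling by $\zz^{-\mathfrak{v}}$ (harmless on $\sing^*$ by Lemma~\ref{monomgradlog}) with the pullback by $\tau_N$ (acting on log-gradients by $N^T$ via Lemma~\ref{monomchain}) and tracking everything through projective quotients so that the correspondence between CPAI directions for $H$ at $p$ and for $\overline{H}$ at $Z$ is a genuine bijection. Once that dictionary is in place the theorem falls out of Theorem~\ref{nongeneric} applied to $\overline{H}$ at $Z$.
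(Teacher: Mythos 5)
Your proposal is correct and follows essentially the same route as the paper's proof: reduce to Theorem~\ref{nongeneric} via Lemmas~\ref{modsimple}, \ref{seqtransf}, and \ref{affineconv}, use Lemmas~\ref{monomgradlog} and \ref{monomchain} to show $\grad_{\log}\overline{H}(\mathbf{w}_n)\to\mathbf{0}$, and transfer the codimension-1 conclusion back through the projective linear isomorphism $N^T$. Your extra care in lifting sequences via surjectivity of $\tau_N$ to get the reverse containment is a slight elaboration of the paper's appeal to Lemma~\ref{seqtransf}(2c), not a different argument.
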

	\begin{proof}
		Recall that, for points on $\sing^*$, we have by Lemma \ref{monomgradlog} that $\zz_n^{-\mathfrak{v}} \grad_{\log} H = \grad_{\log} (\zz_n^{-\mathfrak{v}} H)$, so by Lemma \ref{transfpoly}, $$\grad_{\log} \overline{H}(\seqw) = N^T \grad_{\log} (\zz_{m_n}^{-\mathfrak{v}} H(\zz_{m_n})) = \zz_{m_n}^{-\mathfrak{v}} N^T \grad_{\log} H(\zz_{m_n}).$$ If $\zz_n^{-\mathfrak{v}} \grad_{\log} H$ converges to the zero vector, then $\grad_{\log} \overline{H}(\seqw)$ must as well. Applying Theorem \ref{nongeneric}, the space of limiting log-gradient directions of $\overline{H}$ for sequences converging to $Z$ has codimension 1 and includes all directions parallel to $N^T F$ (that is, to the intersection of the last $k$ coordinate hyperplanes), so by conclusion 2(c) of Lemma \ref{seqtransf}, $p$ (before the monomial transformation) is a CPAI for a codimension-1 set of directions that contains all directions parallel to $F$.
	\end{proof}
	
	\begin{remark}
		Theorem \ref{simplecaseresult} works best in cases when $N$ is unimodular. When $N$ is not unimodular, the vectors formed by exponents of the $y_j$ variables of terms in $\overline{H}$ lie in a proper sublattice of $\mathbb{Z}^k$, which therefore cannot contain all $k$ standard basis vectors. If it happens not to contain \textit{any} of them, then the gradient of $\overline{H}$ at $Z$ vanishes for structural reasons.
	\end{remark}

\section{Future Directions}

There are a couple of natural potential extensions of this work that will be left for future research. First, rather than viewing critical points at infinity as obstructions to asymptotic analysis, it may be possible to analyze the asymptotic contribution of a CPAI just as we would for an affine critical point, at least in certain simple cases. One possible route to this would be to perform a monomial transformation as in the previous section, reduce the number of variables in the monomially transformed generating function to $d-k$ by setting the last $k$ variables to zero, and compute the asymptotic contribution of the affine critical points of the resulting generating function. This appears most promising when $F$ has codimension 1 and $\mathbf{r}$ is parallel to $F$, but there is still room for technical issues in degenerate cases because the asymptotics in a direction is defined to be the limiting asymptotics for exponent vectors in ALL nearby directions, not just those that are exactly parallel to $F$.

Another promising future research direction is to investigate critical points at infinity for stratified (non-smooth) manifolds that are not eventually smooth as the variety approaches a given face at infinity in the compactification. In ACSV, non-smooth varieties are handled through stratification, or partitioning into a disjoint union of finitely many manifolds (``strata") of different dimensions (and possibly some isolated points, called 0-dimensional strata). A very rough outline of computing directions in which CPAI's can occur for a sequence of points lying in a $(d-k)$-dimensional stratum $S$ of $\sing$ might look like the following:

\begin{enumerate}
	\item Write the stratum $S$ locally as an intersection of $k$ transversely intersecting algebraic hypersurfaces $V(H_j)$; if a Whitney stratification (see Appendix C of \cite{PW-book}) was chosen, then there should be such a decomposition that works for sufficiently large $n$.
	\item Any vector normal to $S$ at $\zz_n$ is a linear combination of the log-gradients of the functions $H_j$ at $\zz_n$.
	\item Use the smooth case to analyze the limiting directions for the log-gradient of each $H_j$ as $n$ grows large.
\end{enumerate}

There are at least a couple of difficulties with this approach: First and foremost, item 3 produces limiting log-gradient directions that depend on the Newton polytope of $H_j$ rather than that of $H$. A less obvious concern is that the $k$ hypersurfaces that intersect transversely at each $\zz_n$ may approach non-transversality in the limit (meaning that their log-gradients are linearly independent at each $\zz_n$ but approach being linearly dependent). In this case, we are no longer guaranteed that any projective limit of a sequence $\mathbf{r}_n$ of normals to $S$ at $\zz_n$ (where $\mathbf{r}_n$ is a linear combination of the log-gradients of the $H_j$ at $\zz_n$) will be parallel to the span of the projective limits of the $\grad_{\log} H_j(\zz_n)$; this is because the sequence of normals could converge to the zero vector and have projective directions that do not converge to be in the smaller span of the projective limits of the $\grad_{\log} H_j(\zz_n)$. Therefore, the generalization of the methods outlined for smooth $\sing^*$ to stratified $\sing^*$ will also have to be left for future research.

\bibliographystyle{alpha}
\bibliography{RPbib}

\end{document}